\documentclass[11pt]{amsart}
\usepackage{amsmath}
\usepackage{amsthm}
\usepackage{amsfonts} 

\usepackage{mathabx}
\usepackage{amssymb}
\usepackage[english]{babel} 
\usepackage[colorlinks = true,
            linkcolor = blue,
            urlcolor  = blue,
            citecolor = blue,
            anchorcolor = blue]{hyperref} 
            
\usepackage{cleveref}
\usepackage{calc} 
\usepackage{dsfont}
\usepackage{enumitem}
\usepackage{placeins}

\usepackage{graphicx}
\usepackage{subcaption}
\usepackage{float}

\usepackage{physics}
\usepackage{tikz}

\allowdisplaybreaks
\usepackage[a4paper, lmargin=0.1666\paperwidth, rmargin=0.1666\paperwidth, tmargin=0.1111\paperheight, bmargin=0.1111\paperheight]{geometry} 
\usepackage[all]{nowidow}
\usepackage{lipsum} 

\newtheorem{claim}{Claim}

\theoremstyle{plain}
\newtheorem{theorem}{Theorem}[section]

\newtheorem{lemma}[theorem]{Lemma}
\newtheorem{corollary}{Corollary}[section]

\theoremstyle{definition}
\newtheorem{definition}[theorem]{Definition}

\theoremstyle{remark}

\newcommand{\opp}{\operatorname{opp}}

\newtheoremstyle{named}{}{}{\itshape}{}{\bfseries}{.}{.5em}{\thmnote{#3}#1}
\theoremstyle{named}
\newtheorem*{namedtheorem}{}
\setlist[enumerate]{itemsep=0pt, topsep=2pt}

\usepackage{algorithm}
\usepackage{algpseudocode}

\title{Positive braid closures and taut foliations}
\author{Zipei Nie}

\address{Department of Mathematics\\
University of Illinois at Urbana--Champaign\\
Urbana, IL 61801, USA}

\email{znie@illinois.edu}
	
\date{\today}

\begin{document} 

\begin{abstract}
We study taut foliations on the complements of non-split positive braid closures in $S^3$. If $L$ is such a link with components $L_1,\ldots,L_n$ and at least one component is not the unknot, then the Dehn surgery along a multislope $(s_1,\ldots,s_n)\in\mathbb{Q}^n$ satisfying $s_i<2g(L_i)-1$ for $i=1,2,\ldots, n$ yields a non-L-space that admits a co-oriented taut foliation.
\end{abstract}
\maketitle
\section{Introduction}
A central theme in $3$-manifold topology is the relationship between taut foliations, left-orderability, and Heegaard Floer homology. The L-space conjecture \cite{boyer2013spaces,juhasz2015survey} predicts that the following are equivalent for an irreducible rational homology $3$-sphere $Y$:
\begin{enumerate}[label=(\alph*)]
    \item $Y$ is a non-L-space;
    \item $\pi_1(Y)$ is left-orderable;
    \item $Y$ admits a co-oriented taut foliation.
\end{enumerate}
In general, only the implication (c)$\Rightarrow$(a) \cite{ozsvath2004holomorphic,bowden2016approximating,kazez2017c0} is known.

Dehn surgery on knots in $S^3$ provides a natural source of both L-spaces and non-L-spaces. For a nontrivial knot $K\subset S^3$, the set of slopes yielding L-spaces \cite{kronheimer2007monopoles,rasmussen2017floer} is known to be one of the following three possibilities:
\begin{enumerate}
\item $[2g(K)-1,\infty)$, in which case $K$ is called a positive L-space knot;
\item $(-\infty,-2g(K)+1]$, in which case $K$ is called a negative L-space knot;
\item the empty set, in which case $K$ is called a non-L-space knot.
\end{enumerate}
Here $g(K)$ denotes the Seifert genus of $K$. Thus, the L-space interval is completely determined once we know which case $K$ falls into. In contrast, the corresponding interval for taut foliations or left-orderability is much less understood.

In this paper, we study the existence of taut foliations on non-L-spaces obtained by Dehn surgeries on L-space knots in $S^3$.

Most known positive L-space knots in $S^3$ arise as positive braid closures. For example, among the $632$ positive L-space knots in the SnapPy census with at most $9$ tetrahedra, $631$ are positive braid closures \cite{baker2024census}, and the remaining example $o9\_30634$ is the only known exception. Moreover, every positive $(1,1)$ L-space knot in $S^3$ is a positive braid closure \cite{nie2021explicit}. 

When a link is represented by a braid closure, it is not straightforward to determine whether it is a knot or a multi-component link. For this reason, we work in the more general setting of links rather than restricting ourselves to knots. Our main theorem constructs taut foliations on the complement of a non-split positive braid closure in $S^3$ with at least one nontrivial component\footnote{It is not sufficient to merely assume $L$ is nontrivial. The Hopf link provides a counterexample.}.

\begin{namedtheorem}[Main Theorem]
Suppose $L$ is the closure of a non-split positive braid in $S^3$ with components $L_1,\ldots,L_n$, and assume that at least one component is not the unknot. Then the Dehn surgery along a multislope $(s_1,\ldots,s_n)\in\mathbb{Q}^n$ satisfying $s_i<2g(L_i)-1$ for all $i=1,\ldots,n$ yields a $3$-manifold admitting a co-oriented taut foliation.
\end{namedtheorem}

As an immediate corollary, the resulting manifolds are non-L-spaces.

Lyu proved that $(1,1)$ non-L-space knots are persistently foliar \cite{lyu2025persistent}. It is also known that positive $(1,1)$ L-space knots in $S^3$ are positive braid closures, and that the L-spaces obtained by Dehn surgeries on $(1,1)$ knots have non-left-orderable fundamental groups \cite{nie2021explicit,Li2024TautFoliations}. Combining these results with our main theorem yields the following corollary.

\begin{corollary}
For $3$-manifolds obtained by Dehn surgery on a $(1,1)$ knot in $S^3$, the implications
\[
(a)\Leftrightarrow(c)\quad\text{and}\quad (a)\Leftarrow(b)
\]
hold in the L-space conjecture.
\end{corollary}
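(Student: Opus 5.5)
The corollary will follow by combining the Main Theorem with the known results cited just before it. No new foliation construction is needed.

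Let $Y=K(s)$ be obtained by Dehn surgery on a $(1,1)$ knot $K\subset S^3$, and split into cases according to the trichotomy of L-space slopes recalled in the introduction. The implication (c)$\Rightarrow$(a) holds in general \cite{ozsvath2004holomorphic,bowden2016approximating,kazez2017c0}. So for (a)$\Leftrightarrow$(c) it remains to show that every non-L-space surgery on $K$ carries a co-oriented taut foliation. For (a)$\Leftarrow$(b) we prove the contrapositive: L-space surgeries on $K$ have non-left-orderable fundamental group, which is exactly the cited result \cite{nie2021explicit,Li2024TautFoliations}. Thus the work is concentrated in (a)$\Rightarrow$(c).

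\textbf{Case 1: $K$ is a non-L-space knot.} Lyu's theorem \cite{lyu2025persistent} says $K$ is persistently foliar. Hence every nontrivial surgery $K(s)$ admits a co-oriented taut foliation. For the slope $\infty$ we get $S^3$, which is an L-space, so (a) fails there and the equivalence holds vacuously.

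\textbf{Case 2: $K$ is a positive L-space knot.} The L-space slopes are exactly $[2g(K)-1,\infty]$, so the non-L-space slopes are $s<2g(K)-1$. By \cite{nie2021explicit}, $K$ is the closure of a positive braid. We apply the Main Theorem with $n=1$. The closure of a positive braid is a knot, which is automatically non-split, and $K$ is nontrivial. Therefore every $s<2g(K)-1$ gives a co-oriented taut foliation.

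\textbf{Case 2 for the mirror: $K$ is a negative L-space knot.} Its mirror $\overline K$ is a positive L-space knot, and it is still a $(1,1)$ knot. We have $K(s)\cong -\overline K(-s)$. Reversing the orientation of a manifold preserves taut foliations (keeping co-orientability), preserves L-spaces, and preserves left-orderability. So this case reduces to the previous one.

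\textbf{Trivial knot.} If $K$ is the unknot, the surgeries are lens spaces and $S^1\times S^2$. Lens spaces satisfy all three conditions negatively. For $S^1\times S^2$ the L-space conjecture is stated only for rational homology spheres, so it can be excluded (or it is handled by the product foliation).

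\textbf{Main obstacle.} The step needing the most care is checking the hypotheses of the Main Theorem in Case 2, namely non-split, at least one non-unknot component, and a strict inequality in the slope, together with matching the slope conventions under mirroring. All of these appear routine, so I expect no genuine difficulty.
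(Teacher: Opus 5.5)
Your proposal is correct and matches the paper's argument. The corollary follows by combining Lyu's persistent foliarity of $(1,1)$ non-L-space knots, the fact that positive $(1,1)$ L-space knots are positive braid closures (so the Main Theorem applies), and the non-left-orderability of L-space surgeries on $(1,1)$ knots. Your explicit mirroring argument for negative L-space knots and your separate treatment of the unknot simply spell out cases the paper leaves implicit.
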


The remaining implication $(a)\Rightarrow(b)$ is still open, even for the $(-2,3,7)$-pretzel knot \cite{varvarezos2021representations}. 

Our construction differs from previous approaches. Most existing methods begin with a Seifert surface and then apply sutured manifold decompositions and splittings to produce a laminar branched surface. This has been the standard approach for over two decades \cite{roberts2000taut,roberts2001taut,li2003boundary,nakae20072,li2014taut,krishna2020taut,santoro2024spaces,krishna2025taut}.

Instead, we first pinch the Seifert surface to obtain a simpler branched surface, and then perform splittings. From the theory of laminar branched surfaces \cite{li2002laminar}, this intermediate step is not logically necessary. However, for explicit constructions and algorithmic implementation, simplifying the branched surface provides substantial advantages.

In Li's general framework, finding suitable splittings may require searching an exponentially large space of possibilities. In contrast, we design a greedy splitting algorithm of linear complexity (Algorithm~\ref{alg:brick-curve}) that produces a laminar branched surface from the simplified model.

In Krishna's work \cite{krishna2020taut,krishna2025taut} on taut foliations in braid closure complements, the branched surface is constructed by specifying coorientations for the product disks in Gabai's disk decomposition. This is equivalent to choosing curves $\gamma_1,\gamma_2,\ldots,\gamma_k$ in Section~\ref{sec:curve} with the requirement that each $\gamma_i$ lies in the closed region between the $i$-th row and the $(i+1)$-st row. Our construction removes this restriction, which allows greater flexibility and leads to a more general result.

The paper is organized as follows. Section~\ref{sec:2} constructs the branched surface $B$. Section~\ref{sec:3} splits $B$ into a laminar branched surface $B'$, assuming the existence of simple closed curves \(\gamma_1, \gamma_2, \ldots, \gamma_k\) satisfying Properties~\ref{G1}--\ref{G8}. Section~\ref{sec:curve} describes the algorithm producing the curves $\gamma_1,\dots,\gamma_k$ used in the splitting. Section~\ref{sec:5} analyzes the boundary train track. Section~\ref{sec:6} combines these ingredients to prove the main theorem.
\subsection*{Acknowledgements.}
The author is grateful to Qingfeng Lyu for many stimulating and ongoing discussions, and to Jacob Rasmussen for valuable advice and insightful discussions.

\section{Branched surface construction}\label{sec:2}
Let $L$ be a non-split positive braid closure in $S^3$. In this section, we construct a branched surface properly embedded in \(S^{3}\setminus \operatorname{int}(\nu(L)).\)

We use the standard terminology for branched surfaces in a $3$-manifold.

The \emph{branch locus} of a branched surface $B$ is the set of non-manifold points of $B$. Each component of the branch locus carries a well-defined \emph{branch direction}. The complementary components of the branch locus in $B$ are called \emph{branch sectors}.

A branched surface $B\subset S^{3}\setminus \operatorname{int}(\nu(L))$ admits a \emph{fibered neighborhood} $N(B)$ whose fibers are intervals ($\emph{I}$-\emph{fibers}). The boundary of $N(B)$ decomposes into three parts: the \emph{horizontal boundary} $\partial_h N(B)$ (transverse to the $I$-fibers), the \emph{vertical boundary} $\partial_v N(B)$ (tangent to the $I$-fibers), and $N(B)\cap\partial\nu(L)$.

We also use the standard terminology for train tracks. 

Vertices are called \emph{switches}, edges are called \emph{branches}, and immersed curves that follow the switch directions are called \emph{train routes}. For a properly embedded branched surface $B$, the boundary $\partial B$ is a train track on $\partial\nu(L)$.

First, we choose a link diagram $D$ of $L$. Next, we place a train track $\tau$, obtained from a local modification of $D$, in front of $D$. Finally, we place a loop $S^1$ further in front of $\tau$.

The construction of the branched surface $B$ proceeds in two stages. Projecting $\tau$ onto $D$ produces the first part of the branched surface (see Figure~\ref{fig:level12}). Collapsing $\tau$ to the loop $S^1$ and attaching a disk yields the remaining part of the branched surface (see Figure~\ref{fig:level23}).

\begin{figure}[!htbp]
\centering
\includegraphics[width=8cm]{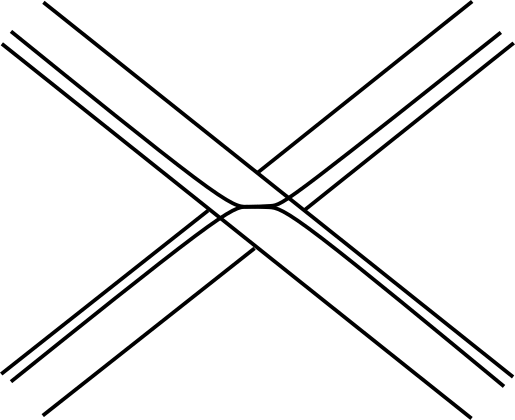}
\caption{$\bigl\{(p,t)\in(\tau\times[0,2])\setminus\operatorname{int}(\nu(L)):\ t\ge h(p)\bigr\}$}
\label{fig:level12}
\end{figure}

\begin{figure}[!htbp]
\centering
\begin{tikzpicture}
\node[anchor=south west, inner sep=0] (img)
{\includegraphics[width=8cm]{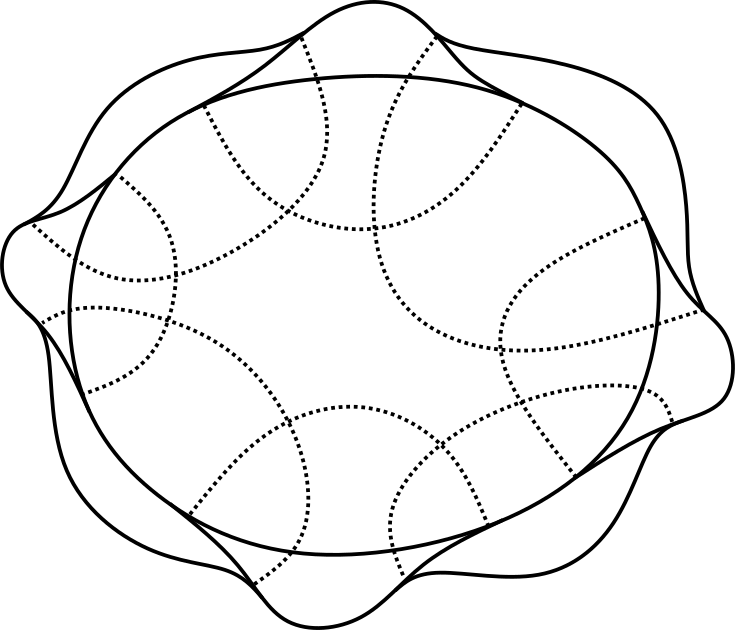}};
\begin{scope}[x={(img.south east)},y={(img.north west)}]
\node at (0.45,0.5) {$D^2$};
\node at (0.8,0.98) {$(\tau \times [2,3])/\sim$};
\end{scope}
\end{tikzpicture}
\caption{$((\tau \times [2,3])/\sim)\cup D^2$.}
\label{fig:level23}
\end{figure}

\subsection{The first level}
Fix a standard positive braid diagram $D\subset S^1\times I$ for $L$, in which the braid is drawn horizontally and all crossings are positive. We assume that the number of braid strands is minimal among all positive braid diagrams representing $L$. We realize the link $L$ in a standard product neighborhood $S^1\times I\times[0,1]\subset S^3$ using the braid diagram $D\subset S^1\times I$.

Choose finitely many points $p$ on $D$, away from the crossings, so that $D$ is decomposed into a finite collection of arcs, and each arc lies entirely either on the over-strand (called \emph{over-arcs}) or on the under-strand (called \emph{under-arcs}) at every crossing it encounters. We assume that the number of chosen points $p$ is minimal among all such choices. Under this assumption, each over–arc intersects at least one under–arc at a crossing, and each under–arc intersects at least one over–arc at a crossing.

The over-arcs are placed in the level $S^1\times I\times\{1\}$,
while the under-arcs are placed in the level $S^1\times I\times\{0\}$. At each chosen point $p$, we connect the endpoints of the adjacent arcs by an interval arc $\{p\}\times[0,1]$.

From now on, we regard $L$ as the union of these arcs.

\subsection{The second level}

We construct a train track $\tau\subset S^1\times I$ that is transverse to every vertical line by a local modification of the positive braid diagram $D\subset S^1\times I$.

Near each crossing of $D$, we replace the crossing with a short horizontal segment, with two branches emerging from the left and two branches emerging from the right, as in the standard smoothing into a train track. Outside a small neighborhood of the crossings, the diagram remains unchanged.

The train track $\tau$ can be decomposed into arcs of three types: \emph{over-type}, if the arc arises from an over-arc of $D$; \emph{under-type}, if it arises from an under-arc of $D$; and \emph{mix-type}, if it is a short horizontal segment.

Define a function $h\colon \tau \to \{0,1\}$ by
\[
h(p) :=
\begin{cases}
1, & \text{if $p$ is on an over-type arc},\\
0, & \text{if $p$ is on an under-type arc},\\
0, & \text{if $p$ is on a mix-type arc}.
\end{cases}
\]

Since the resulting train track $\tau$ differs from $D$ only within a small neighborhood of the crossings, we may assume that $(p, h(p))\in \nu(L)$. 

\subsection{The third level}
We view the product neighborhood $S^1\times I\times[0,1]\subset S^3$ as a submanifold of a larger product region
\[
S^1\times I\times[0,1]\subset S^1\times I\times[0,3]\subset S^3.
\]

Suppose that the complement $(S^1\times I)\setminus \tau$ contains $N$ bigons. Define an equivalence relation $\sim$ on $(S^1\times I)\times[0,3]$ by declaring that $(p,t)\sim(p',t')$ if and only if
\begin{enumerate}[label=(\arabic*)]
\item $(p,t)=(p',t')$, or 
\item there exists a bigon $R_i$ such that
\begin{enumerate}
    \item $p,p'\in R_i$, and
    \item the line segment from $p$ to $p'$ is vertical, and
    \item $t=t'\ge 2+\frac{i}{N+1}$.
\end{enumerate} 
\end{enumerate}

Under the quotient map, the image of the branched surface
\[
\bigl\{ (p,t)\in (\tau\times[0,3])\setminus \operatorname{int}(\nu(L)) : t \ge h(p) \bigr\}\subset S^1\times I\times[0,3]
\]
is also a branched surface. Since the quotient space $S^1\times I\times[0,3]/\sim$ is homeomorphic to the standard copy of $S^1\times I\times[0,3]$ in $S^3$, we may regard the image as a branched surface in $S^1\times I\times[0,3]$.

Because $L$ is non-split, this branched surface intersects the level $S^1\times I\times\{3\}$ in a single loop $S^1$. We attach another disk $D^2$ in $S^3\setminus \operatorname{int}(\nu(L))$ along this loop, and denote the resulting branched surface properly embedded in $S^3\setminus \operatorname{int}(\nu(L))$ by $B$.

\subsection{Properties of the branched surface \texorpdfstring{$B$}{B}}

In this subsection, we prove some properties of the branched surface $B$ constructed above. 

\begin{theorem}\label{cooriented}
    The branched surface $B$ admits a co-orientation.
\end{theorem}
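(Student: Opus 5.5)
A co-orientation will be written down explicitly, sector by sector, and then checked to be compatible along the branch locus. The guiding observation is that every piece of $B$ before the final disk lies inside $\tau\times[0,3]$, a union of vertical sheets $\tau\times[0,3]$, and $\tau$ is transverse to every vertical line of $S^1\times I$. So I would first co-orient $\tau$ itself. Since $\tau$ is transverse to the vertical direction in $S^1\times I$, the upward vertical direction gives a transverse orientation of every branch of $\tau$, and this is consistent at switches: at a switch all branches are tangent, and the vertical vector points to the same side of all of them. Pulling this back gives a co-orientation of $\tau\times[0,3]$, namely the vertical direction of $S^1\times I$, constant in the $t$-coordinate. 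It restricts to the first-level piece $\{(p,t): t\ge h(p)\}\setminus \operatorname{int}(\nu(L))$.

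Next I would check that this co-orientation survives each of the identifications that create branching.

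\begin{enumerate}
\item In the first level, the branch locus consists of the arcs $\{(p,t)\}$ with $p$ a switch of $\tau$ and $t\ge h(p)$. There are also the places where an over-type sheet starting at height $1$ meets the mix-type sheet. Both kinds of locus come from switches of $\tau$ crossed with an interval, so compatibility reduces to the switch compatibility already noted.
\item In the third level, the quotient $\sim$ collapses each bigon $R_i$ vertically at heights $t\ge 2+\tfrac{i}{N+1}$. This identifies the two sides of $\partial R_i$, the top and bottom branches, which are joined by vertical segments. The vertical vector points out of $R_i$ on the top branch and into $R_i$ on the bottom branch, so the two sheets being glued carry the same co-orientation direction. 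The resulting branch locus at $t=2+\tfrac{i}{N+1}$ is therefore co-orientable. The heights are staggered, so bigons are collapsed one at a time and the argument applies inductively, including when an earlier collapse has changed the neighborhood of a later bigon.
\item After all collapses, the top level is a single loop $S^1$ in $S^1\times I\times\{3\}$, co-oriented by the vertical direction. The attached disk $D^2$ is a smooth continuation of the sheet along this loop, not a branch. Since $D^2$ is two-sided, the co-orientation extends over it.
\end{enumerate}

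The main obstacle is step 2. One must verify that, after earlier bigons have been collapsed, the sheets meeting at each new branch curve still induce matching co-orientations. One must also check that the branch direction is well defined, with no half-disk of contact. I would handle this by giving a local model of the collapse of a single bigon: it is a standard "zipping" of two parallel sheets, which is co-orientation preserving exactly when the sheets are co-oriented in the same direction. I would then argue that the vertical co-orientation is invariant under all these zippings, because each collapse is along vertical segments, which are transverse to $\tau$.
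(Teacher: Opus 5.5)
Your proposal is correct and follows essentially the same route as the paper: co-orient $\tau$ by the vertical direction of $S^1\times I$, which descends through the vertical collapse $\sim$ and extends over the attached disk $D^2$. The paper chooses the downward rather than the upward direction, which makes no difference for existence, and your checks at the switches and at the staggered bigon collapses simply spell out what the paper's phrase ``descends through the quotient map'' leaves implicit.
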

\begin{proof}
By construction, the train track $\tau\subset S^1\times I$ is transverse to every vertical line. We declare the positive side to be the downward vertical direction. The co-orientation descends through the quotient map and extends naturally over the disk $D^2$ attached at the level $S^1\times I\times \{3\}$.
\end{proof}

A \emph{sutured manifold} is a pair $(M,\gamma)$, where $M$ is a compact $3$-manifold with boundary and $\gamma$ is a collection of simple closed curves in $\partial M$ that divide $\partial M$ into two subsurfaces $R_+(\gamma)$ and $R_-(\gamma)$.

\begin{theorem}\label{disk}
The sutured manifold
\[
\bigl( S^3\setminus \operatorname{int}(\nu(L)\cup N(B)), \overline{\partial\nu(L)\setminus  N(B) }\cup \partial_v N(B)\bigr)
\]
is homeomorphic to
\[
(D^2\times I,\ \partial D^2\times I).
\]
\end{theorem}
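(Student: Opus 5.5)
We will show that the complement of $\nu(L)\cup N(B)$ in $S^3$ is a single $3$-ball whose boundary is divided by one annular suture. The plan is to cut $S^3\setminus \operatorname{int}(\nu(L))$ along $B$ one level at a time, following the construction, and to keep track of the pieces.

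\emph{Step 1: the region outside the product.} The disk $D^2$ attached at level $S^1\times I\times\{3\}$, together with the region above that level, cuts off a ball on the far side. Outside the product $S^1\times I\times[0,3]$, this ball is bounded by $D^2$ and by the complementary solid torus region. Since $L$ is non-split, the level-$3$ trace of $B$ is a single loop, so all of $S^3$ minus the product is glued to the part below $D^2$.

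\emph{Step 2: the product region, sliced by $t$.} We view $(S^1\times I\times[0,3])\setminus(\nu(L)\cup N(B))$ as fibered over vertical lines $\{x\}\times I$ with $x\in S^1$. Equivalently, we decompose it by the vertical projection to $S^1\times I$. Over each complementary region $R$ of $\tau$ in $S^1\times I$ we have a block $R\times[\text{lower},\text{upper}]$. The lower face is $\tau$-projected to $D$ (height $h$, or $\nu(L)$). The upper face is either the next sheet of $B$ or, for a bigon $R_i$, the collapse at height $2+i/(N+1)$.

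For a bigon, the identification $\sim$ pinches the top of the block to a vertical arc, so the block is a $3$-ball. Because the heights $2+i/(N+1)$ are distinct, the bigons are collapsed in a definite order. Consequently these balls are stacked and glued along disks, namely the vertical pieces of $\tau\times[2,3]$ that are not yet merged.

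The non-bigon regions are the top and bottom annular regions adjacent to $S^1\times\partial I$. These connect to the outside region of Step 1.

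\emph{Step 3: assemble and identify the sutures.} We perform the gluings as a sequence in which each attaches a ball to a ball along a single disk, so the result is a ball. This requires that the adjacency graph of complementary regions, with the order of collapses, is a tree-like sequence with each new gluing along a connected disk. Here the minimality assumptions are used: minimal points on $D$, each over-arc meeting an under-arc, and minimal strand number, which is where positivity and non-splitness enter. Together they rule out gluings along annuli or along two disks, which would create handles.

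Next we identify the boundary. We need $R_\pm$ to be the two horizontal faces, coming from the two sides of $B$ via the co-orientation of Theorem~\ref{cooriented}. The sutures are $\partial_v N(B)$ together with the pieces of $\partial\nu(L)$. We check that their union is a single annulus $\partial D^2\times I$. For this we trace each cusp of $B$ (bigon collapses, the switches of $\tau$, and the over/under transitions at the points $p$) and verify that each contributes a band joining consistently to the $\partial\nu(L)$ pieces.

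\emph{Main obstacle.} The hard part is the bookkeeping in Step 3: showing that every gluing is along exactly one disk, which gives no $1$-handles, and that the suture is connected. We will first attempt an Euler characteristic count. We will compute $\chi$ of the cut-open manifold from the numbers of crossings, arcs, strands and bigons, and aim to get $\chi=1$ with connected boundary. Together with irreducibility of the complement of a ball-separated piece, this forces a ball. We will then count suture components through the branch-locus ends on $\partial\nu(L)$.
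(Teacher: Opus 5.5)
Your proposal has genuine gaps at the two places you yourself label the ``main obstacle'', and the picture in Step~2 is not accurate. The pieces of $\tau\times[2,3]$ lie in $B$, so they cannot be the disks along which complementary blocks are glued. The columns over distinct bigons are separated by the walls of $B$. They communicate only from below: through the solid torus outside $S^1\times I\times[0,3]$, and through the gaps beneath over-type arcs of $\tau$, where the wall starts at height $1$.

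The claim that every gluing is along a single disk is then just asserted. The minimality hypotheses you invoke play no role in this theorem: it holds for any positive braid diagram with non-split closure, and minimal strand number is used only later, in Section~4.

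The missing idea is to study $\nu(L)\cup N(B)$ rather than its complement.
\begin{itemize}
\item It deformation retracts onto $\{(p,t)\in\tau\times[0,3]: t\ge h(p)\}/{\sim}\cup D^2$.
\item The upward homotopy $f(p,t,x)=(p,\max(t,3x))$ descends through $\sim$, since $\sim$ only identifies points at a common height above a threshold and $f$ only raises heights.
\item It therefore retracts further onto $(\tau\times\{3\})/{\sim}\cup D^2$, where $(\tau\times\{3\})/{\sim}$ is a single circle by non-splitness.
\end{itemize}
Hence $\nu(L)\cup N(B)$ is contractible, its boundary is a $2$-sphere, and the complement is a ball by Alexander's theorem. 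Your Euler characteristic fallback would at best give $\chi=1$, not connectedness of the boundary, and that is the real content.

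Second, knowing the complement is a ball says nothing about the number of sutures. A ball can carry several parallel suture annuli, with $R_+$ or $R_-$ disconnected, and a local inspection of cusps cannot detect this global count.

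The paper handles it as follows. Both $\overline{\partial\nu(L)\setminus N(B)}$ and $\partial_v N(B)$ are disjoint unions of rectangles, so by the co-orientation it suffices to show that their core arcs close up into one loop. These core arcs form the $1$-manifold obtained from $\tau$ in two steps:
\begin{itemize}
\item at each switch, detach the top-left incoming branch if two branches enter from the left, or the bottom-right one if two enter from the right;
\item reconnect the two detached ends across each bigon.
\end{itemize}
Connectedness is proved by induction on the number of strands. $0$-resolving the crossings with no bigon directly below (the $\sigma_{m-1}$ crossings) splits $L$ as $L'\cup U$. Here $L'$ is a non-split positive braid closure on one fewer strand and $U$ is the bottom strand. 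The curve for $L$ arises from the curve for $L'$ and $U$ by reconnecting across each bigon of the bottom row, which fuses them into one loop.

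Some global argument of this kind is needed; ``counting suture components through branch-locus ends'' does not provide one.
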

\begin{proof}
    First, there exists a deformation retraction from
\[
\nu(L)\cup
N\left(
\bigl\{ (p,t)\in (\tau\times[0,3])\setminus \operatorname{int}(\nu(L))
: t \ge h(p) \bigr\}
\right)
\]
onto
\[
\bigl\{ (p,t)\in \tau\times[0,3] : t \ge h(p) \bigr\}.
\]

Next, define a homotopy
\[
f \colon \tau \times [0,3] \times [0,1] \longrightarrow \tau \times [0,3]
\quad\text{by}\quad
f(p,t,x) := \bigl(p,\max(t,3x)\bigr).
\]
This homotopy gives a deformation retraction from
\[
\bigl\{ (p,t)\in \tau\times[0,3] : t \ge h(p) \bigr\}
\]
onto $\tau \times \{3\}$.

Both deformation retractions descend through the quotient map. Therefore, by composing these deformation retractions, passing to the quotient, and finally contracting the attached disk, we conclude that $\nu(L)\cup N(B)$ is contractible. Therefore, \(S^3\setminus \operatorname{int}(\nu(L)\cup N(B))\)
is homeomorphic to $D^2\times I$.

Both $\overline{\partial\nu(L)\setminus  N(B) }$ and $\partial_v N(B)$ are unions of disjoint rectangles. Thus, to prove the desired result, it suffices to show that these rectangles are glued together to form a single annulus, rather than several disjoint annuli. Since $B$ is co-oriented (see \Cref{cooriented}), it suffices to consider the core curves of these annuli.

The core curves can be described by a modification of the train track $\tau\subset S^1\times I$. At each switch of $\tau$, if there are two branches entering from the left, we disconnect the top-left branch from the switch; if there are two branches entering from the right, we disconnect the bottom-right branch from the switch. The resulting arcs represent the core curves of the rectangles in $\overline{\partial\nu(L)\setminus  N(B) }$.

For each bigon complementary region of $\tau$, we connect the two disconnected branches corresponding to the switches at the two vertices of the bigon. These connecting arcs represent the core curves of the rectangles in $\partial_v N(B)$. Let $\gamma$ denote the resulting $1$-manifold. We will prove that $\gamma$ is connected by induction on the number of braid strands.

If $L$ has only one braid strand, then the associated train track $\tau$ has no switches and no bigon complementary regions. In this case, both $\tau$ and the resulting curve $\gamma$ are single embedded loops.

Now suppose that $L$ is a non-split positive braid closure with at least two braid strands. Let $P_1,P_2,\ldots,P_c$ be all crossings $P$ of the braid diagram such that the complementary region directly below $P$ is not a bigon, listed from left to right and indexed cyclically so that $P_{c+1}=P_1$. Perform a $0$-resolution at each of the crossings $P_1,P_2,\ldots,P_c$. Then the link $L$ decomposes as a split union $L'\cup U$, where $L'$ is a non-split positive braid closure with one fewer strand, and $U$ is an unknot passing through the crossings $P_1,\ldots,P_c$.

Let $\gamma'$ denote the $1$-manifold obtained from $L'$ by the above construction. By the induction hypothesis, $\gamma'$ is a single loop. To recover the curve $\gamma$ associated to $L$, we modify $\gamma'$ near the crossings $P_1,\ldots,P_c$ as follows. For each $i=1,\ldots,c$, we disconnect $\gamma'$ in a neighborhood of the crossing $P_{i+1}$, and then reconnect it across the bigon complementary region with vertices $P_i$ and $P_{i+1}$. Concretely, we replace the arc of $\gamma'$ near $P_{i+1}$ by an arc running inside this bigon that connects $P_{i+1}$ to $P_i$, together with an arc of $U$ connecting $P_i$ to $P_{i+1}$.

After performing this operation for all $i=1,\ldots,c$, the loops $\gamma'$ and $U$ are joined into a single loop, which is precisely the $1$-manifold $\gamma$ associated to $L$. This completes the induction.

Therefore, \[
\bigl( S^3\setminus \operatorname{int}(\nu(L)\cup N(B)),
\overline{\partial\nu(L)\setminus N(B) }\cup \partial_v N(B)\bigr)
\]
is homeomorphic to
\[
(D^2\times I,\ \partial D^2\times I)
\] as a sutured manifold.
\end{proof}

\begin{corollary}\label{transversal}
There exists a closed curve in $ S^3\setminus  \operatorname{int}(\nu(L))$ positively transverse to $B$, intersecting every branch sector of $B$.
\end{corollary}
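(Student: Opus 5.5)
We derive the corollary from \Cref{disk} and \Cref{cooriented}. The idea is to build, for each branch sector, a transverse arc through it that starts and ends in the complementary product region, and then join all these arcs inside that region.

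Let $M_0 = S^3\setminus \operatorname{int}(\nu(L)\cup N(B))$. By \Cref{disk}, $(M_0,\partial M_0\cap(\overline{\partial\nu(L)\setminus N(B)}\cup\partial_v N(B)))$ is the product sutured manifold $(D^2\times I,\partial D^2\times I)$. Its horizontal boundary is $\partial_h N(B)$, which splits into two pieces:
\begin{itemize}
\item $R_- = D^2\times\{0\}$, the part of $\partial_h N(B)$ on the negative side of $N(B)$ with respect to the co-orientation of \Cref{cooriented};
\item $R_+ = D^2\times\{1\}$, the part on the positive side.
\end{itemize}
Fixing the labelling of $R_\pm$ this way is the one point that needs care. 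The co-orientation of $B$ induces a transverse orientation on $\partial_h N(B)$, and since $M_0$ is a product with a single suture annulus, all of $\partial_h N(B)\cap\partial M_0$ facing one side of the $I$-fibers forms one disk. So the positive and negative horizontal boundaries are exactly the two disks $D^2\times\{0\}$ and $D^2\times\{1\}$. Consequently every $I$-fiber of $N(B)$, oriented by the co-orientation, runs from a point of the negative horizontal boundary, which is $R_+$ seen from $M_0$, to a point of the positive horizontal boundary, which is $R_-$ seen from $M_0$.

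\textbf{Arcs through the sectors.} For each branch sector $S$, choose an interior point $x_S$ and let $\alpha_S$ be the $I$-fiber of $N(B)$ through $x_S$, oriented positively. Its endpoints lie on $\partial_h N(B)$: it starts on $R_+$ and ends on $R_-$ of $M_0$. There are finitely many sectors, say $S_1,\dots,S_m$.

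\textbf{Connecting in the product.} Inside $M_0\cong D^2\times I$, connect the end of $\alpha_{S_j}$, a point of $D^2\times\{0\}$, to the start of $\alpha_{S_{j+1}}$, a point of $D^2\times\{1\}$. Use an arc in $M_0$, for example a straight segment going up in the $I$-direction, reading indices cyclically. These connecting arcs lie in the interior of $M_0$ except at their endpoints, so they are disjoint from $B$. After a small general position perturbation they are pairwise disjoint and disjoint from the $\alpha$'s except at endpoints.

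\textbf{Smoothing.} Smooth the corners. The concatenation is a closed curve in $S^3\setminus\operatorname{int}(\nu(L))$. It meets $B$ only along the fibers $\alpha_{S_j}$, each of which crosses $B$ transversely and positively, and each crossing happens at a point of $S_j$. Hence the curve is positively transverse to $B$ and intersects every branch sector.
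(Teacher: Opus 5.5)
Your proof is correct and is essentially the paper's argument: take a positively transverse arc through an interior point of each branch sector, then join consecutive arcs through the complementary region, which is connected by \Cref{disk}. The careful labelling of $R_\pm$ is not actually needed (and your wording of it is a little tangled). Only the connectedness of $S^3\setminus\operatorname{int}(\nu(L)\cup N(B))$ is used, because the joining arcs avoid $B$ and so impose no orientation condition.
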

\begin{proof}
    For each branch sector of $B$, choose a point in its interior and take a short arc positively transverse to $B$ through this point, disjoint from $B$ otherwise. Since $(S^3\setminus \operatorname{int}(\nu(L)))\setminus B$ is connected (see \Cref{disk}), we can connect the arcs in order, respecting the orientations, to obtain a closed curve. By construction, this curve is positively transverse to $B$ and intersects every branch sector of $B$.
\end{proof}

\section{Splitting the branched surface}\label{sec:3}
In this section, we assume that at least one component of $L$ is not the unknot. 

We recall the definition of a \emph{laminar branched surface}.

\begin{definition}[{\cite[Definition~2.3]{li2003boundary}}]\label{def-laminar}
Let $B$ be a branched surface properly embedded in a $3$-manifold $M$. We call $B$ a \emph{laminar branched surface} if the following conditions hold after eliminating trivial bubbles:

\begin{enumerate}[label=(\alph*)]
\item the horizontal boundary $\partial_h N(B)$ is incompressible and $\partial$-incompressible in $M\setminus \operatorname{int}(N(B))$, there is no monogon in $M\setminus \operatorname{int}(N(B))$, and no component of $\partial_h N(B)$ is a sphere or a disk properly embedded in $M$;
\item the complement $M\setminus \operatorname{int}(N(B))$ is irreducible, and $\partial M\setminus \operatorname{int}(N(B))$ is incompressible in $M\setminus \operatorname{int}(N(B))$;
\item the branched surface $B$ contains no Reeb branched surface;
\item the branched surface $B$ has no sink disk and no half sink disk.
\end{enumerate}
\end{definition}

For the definitions of \emph{trivial bubble}, \emph{Reeb branched surface},
\emph{sink disk}, and \emph{half sink disk}, see \cite{li2003boundary}.

We will apply the splitting technique (see \cite[Section~5]{li2002laminar}) to obtain a laminar branched surface $B'$ from $B$.

\subsection{Splitting surface construction}
Let $F\subset N(B)$ be a compact embedded surface transverse to the $I$-fibers of $N(B)$. Let $F\times I$ be a product neighborhood of $F$ such that each $x \times I$ ($x\in F$) is a subarc of an $I$-fiber of $N(B)$. After a small perturbation, the complement $N(B)\setminus \operatorname{int}(F\times I)$ is a fibered neighborhood of another branched surface $B'$. We say that $B'$ is obtained from $B$ by splitting along $F$. 

Our construction of the splitting surface $F$ depends on a choice of $k$ simple closed curves \(\gamma_1, \gamma_2, \ldots, \gamma_k \subset S^1 \times I.\) We require the following eight properties of these curves:
\begin{enumerate}[label=(G\arabic*)]
    \item\label{G1} Each curve $\gamma_i$ is transverse to every vertical line and hence intersects each vertical line $\{x\}\times I$ once.
    \item\label{G2} Each curve $\gamma_i$ is composed of two arcs: one arc connects the two vertices of a bigon in the complement of $\tau$, while the other arc lies on the train track $\tau$.
    \item\label{G3} For each vertical line $\{x\}\times I$ with $x\in S^1$, the intersection point
    $\gamma_i \cap (\{x\}\times I)$ lies above or coincides with
    $\gamma_{i+1} \cap (\{x\}\times I)$ for all $i=1,\ldots,k-1$.
    \item\label{G4} The bigons associated to the curves $\gamma_1,\gamma_2, \ldots, \gamma_k$ are pairwise distinct.
    \item\label{G5} There exists a point $p\in \gamma_1$ such that $p$ lies on an under-type arc of $\tau$, and there is no bigon directly above $p$.
    \item\label{G6} There exists a point $p\in \gamma_k$ such that $p$ lies on an under-type arc of $\tau$, and there is no bigon directly below $p$.
    \item\label{G7} For each $1\le i\le k-1$, there exists a vertical segment connecting a point $p_1 \in \gamma_i$ to a point $p_2 \in \gamma_{i+1}$ whose interior is disjoint from $\tau$, such that one of the points $p_1,p_2$ lies on an under-type arc of $\tau$ and the other does not lie on $\tau$.
    \item\label{G8} For each under-arc of $D$, there exists a point $p\in \tau$ lying on an under-type arc of $\tau$ arising from that under-arc, such that $p\notin \gamma_i$ for all $1\le i\le k$.
\end{enumerate}
These properties will be established in Section~\ref{sec:curve}.

Each $\gamma_i$ determines a subsurface
\[
D_i := \bigl(\bigl((\gamma_i \times [1,3])/ \sim\bigr) \cap B \bigr)  \cup D^2
\]
in the branched surface $B$, where the quotient is taken with respect to the equivalence relation $\sim$ used in the construction of $B$, and the disk $D^2$ is the one attached during that construction. By Property~\ref{G1} and Property~\ref{G2}, each $D_i$ is a disk.

By Property~\ref{G3}, we can assign a height to each $\widetilde D_i$ so that, in the fibered neighborhood $N(B)$, the embedded disks $\widetilde D_1,\widetilde D_2,\ldots,\widetilde D_k$ are pairwise disjoint and transverse to the $I$-fibers of $N(B)$.

Let $$F:= \bigcup_{i=1}^k \widetilde D_i$$
be our splitting surface in $N(B)$. Let $B'$ be the branched surface obtained from $B$ by splitting along $F$. By Property~\ref{G4}, $\partial F$ intersects each $I$-fiber in $\partial_v N(B)$ at most once. Hence, $B'$ is properly embedded in $S^3\setminus\operatorname{int}(\nu(L))$. 

\subsection{Properties of the branched surface \texorpdfstring{$B'$}{B'}}
In this subsection, we are going to prove that $B'$ is a laminar branched surface properly embedded in $S^3 \setminus \operatorname{int}(\nu(L))$. 

First, we show that the properties of $B$ established in Subsection~2.4 also hold for $B'$.
\begin{theorem}\label{cooriented-2}
    The branched surface $B'$ admits a co-orientation compatible with that of $B$.
\end{theorem}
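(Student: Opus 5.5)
The idea is that splitting never changes the $I$-fiber structure, so the co-orientation of $B$ passes directly to $B'$. By construction, $N(B')$ is obtained, after a small perturbation, as $N(B)\setminus\operatorname{int}(F\times I)$, where $F\times I$ is a union of subarcs of $I$-fibers of $N(B)$. So each $I$-fiber of $N(B')$ is a subarc of an $I$-fiber of $N(B)$, and fibers are deleted only inside the product region $F\times I$. The co-orientation of $B$ from \Cref{cooriented} is a consistent orientation of the $I$-fibers of $N(B)$, namely the one induced by the downward vertical direction on the train track $\tau$, carried through the quotient and extended over $D^2$. I will restrict this orientation to the fibers of $N(B')$ and check that it defines a co-orientation of $B'$.

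First, the restriction is a continuous orientation of the $I$-fibers of $N(B')$. Each fiber of $N(B')$ lies in a fiber of $N(B)$ and inherits its orientation. Continuity holds because the fibered structure of $N(B')$ is the restriction of that of $N(B)$ away from $\partial(F\times I)$. Near $\partial(F\times I)$, the new vertical boundary created by the splitting consists of subarcs of fibers of $N(B)$, so nothing twists.

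Second, a transverse orientation of the branch sectors of $B'$ is the same thing as this fiber orientation. Collapsing $N(B')$ along its fibers gives $B'$, and each branch sector of $B'$ is transverse to the fibers, so the fiber orientation gives a normal direction on each sector. At the branch locus of $B'$, the sheets meeting there are all transverse to the same oriented fibers. Hence the normal directions agree, which is exactly the compatibility condition for a co-orientation. This co-orientation is compatible with that of $B$ because both point along the same oriented $I$-fibers. Equivalently, under the natural map from $B'$ onto $B$ obtained by collapsing the fibers of $N(B)$, the co-orientation of $B'$ pulls back the one on $B$.

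The only delicate point is the small perturbation that turns $N(B)\setminus\operatorname{int}(F\times I)$ into a genuine fibered neighborhood. I would argue that it can be taken to be an isotopy through fiber-preserving maps supported near $\partial F\times I$, which preserves fiber orientations. Property~\ref{G4} ensures $\partial F$ meets each fiber of $\partial_v N(B)$ at most once, so the perturbation introduces no folding of fibers. With this settled, the rest is formal.
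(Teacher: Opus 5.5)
Your proof is correct, and its core idea is the same as the paper's. In both, the co-orientation of $B$ (the downward vertical direction in $S^1\times I$) is an orientation of the $I$-fibers, and splitting does not disturb it.

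The paper checks this in the explicit model. It cites Property~\ref{G1} to see that the splitting surface $F$, and hence every sheet of $B'$, stays transverse to the vertical lines, so the same downward direction co-orients $B'$.

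You argue intrinsically instead: the fibers of $N(B')\subset N(B)$ are subarcs of fibers of $N(B)$ and inherit their orientation. This shows the statement holds for any splitting surface transverse to the $I$-fibers. That transversality is part of the standing setup, which the paper arranged via Properties~\ref{G1}--\ref{G3}. The paper's version is a one-line check in the concrete picture; yours isolates the general principle that splitting preserves co-orientability.

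One small correction: you do not need Property~\ref{G4} here. The small perturbation is an isotopy of fibered neighborhoods and preserves fiber orientations regardless. In the paper, Property~\ref{G4} is used only to guarantee that $B'$ is properly embedded.
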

\begin{proof}
    By Property~\ref{G1}, the splitting surface $F$ is transverse to every vertical line. The conclusion follows from the construction in \Cref{cooriented}.
\end{proof}

\begin{theorem}\label{disk-2}
    The sutured manifold 
\[
\bigl(
S^3 \setminus \operatorname{int}(\nu(L)\cup N(B')),
\ \overline{\partial \nu(L)\setminus N(B')}
\cup
\partial_v N(B')
\bigr)
\]
is homeomorphic to
\[
(D^2\times I,\ \partial D^2\times I).
\]
\end{theorem}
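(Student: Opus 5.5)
The plan is to compare the complement of $N(B')$ with the complement of $N(B)$, which \Cref{disk} already identifies with $(D^2\times I,\partial D^2\times I)$. By definition,
\[
N(B')=N(B)\setminus \operatorname{int}(F\times I),
\]
so
\[
S^3\setminus\operatorname{int}(\nu(L)\cup N(B')) = \bigl(S^3\setminus\operatorname{int}(\nu(L)\cup N(B))\bigr)\cup \bigcup_{i=1}^k \bigl(\widetilde D_i\times I\bigr).
\]
Each $\widetilde D_i$ is a disk (by \ref{G1} and \ref{G2}), and each $\widetilde D_i\times I$ is a $3$-ball. We would therefore add these balls one at a time, in the order $i=1,\dots,k$, and check at each step that the sutured manifold type $(D^2\times I,\partial D^2\times I)$ is preserved.

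At each step the ball $\widetilde D_i\times I$ meets the previous complement only along part of its boundary. Since $\widetilde D_i$ lies in the interior of an $I$-fiber neighborhood, $\widetilde D_i\times\{0,1\}$ lies inside $N(B)$ (or in the already split region), while $\partial\widetilde D_i\times I$ lies on $\partial_v$-type boundary. Splitting along $\widetilde D_i$ opens a new complementary region: the product $\widetilde D_i\times I$ with its top and bottom horizontal and its side vertical.

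The key point is that $\partial \widetilde D_i$ crosses $\partial_v N(B)$ exactly in the single vertical rectangle associated to its bigon. This is where \ref{G2} enters, and \ref{G4} guarantees the bigons are distinct, so each $I$-fiber of $\partial_v N(B)$ is crossed at most once. The rest of $\partial\widetilde D_i$ runs over $\partial\nu(L)$ or over the annulus suture. The new region $\widetilde D_i\times I$ is thus attached to the old complement along a vertical rectangle (a disk in the suture), and gluing a product ball $D^2\times I$ to $D^2\times I$ along a disk $\alpha\times I$, with $\alpha\subset\partial D^2$ an arc, yields again $D^2\times I$ with suture $\partial D^2\times I$.

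More concretely, we would track the core curve $\gamma$ from the proof of \Cref{disk}. Splitting along $\widetilde D_i$ modifies $\gamma$ by a band move near the bigon of $\gamma_i$: the connecting arc across the bigon is cut and reconnected through the strip along $\gamma_i$. We would check, using co-orientation (\Cref{cooriented-2}), that this band sum keeps the suture a single annulus. Together with the fact that the region remains a ball, this gives the homeomorphism.

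The main obstacle is verifying this last point, namely that the gluing region of $\widetilde D_i\times I$ to the previous complement is a single disk rather than an annulus or several disks. An annulus or several disks would create handles or split the suture. We would try first to use \ref{G1}: since $\gamma_i$ meets each vertical line once and, apart from the bigon arc, lies on $\tau$, the side of $\partial\widetilde D_i\times I$ facing the complement is exactly the bigon arc times $I$, while the rest is flush against $\partial\nu(L)$ or $N(B)$. We would then use \ref{G3} to ensure that successive disks stack compatibly, so that the same argument applies inductively in the order $i=1,\dots,k$.
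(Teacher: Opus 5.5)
Your proposal is correct and is essentially the paper's argument: the complement of $N(B')$ is the complement of $N(B)$ with the $k$ product balls $\widetilde D_i\times I$ attached along rectangles in $\partial_v N(B)$ that span the suture (in distinct components by \ref{G4}), and each such attachment of $D^2\times I$ along $\alpha\times I$ preserves the type $(D^2\times I,\partial D^2\times I)$, so the result follows from \Cref{disk}. The band-move tracking of the core curve and the ordering via \ref{G3} are not needed; also, to be precise, the parts of $\partial\widetilde D_i$ away from its bigon lie on $\partial\nu(L)$ or in the interior of $N(B)$, where they become new vertical boundary of $N(B')$, rather than on the old suture.
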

\begin{proof}
    The manifold \(S^3 \setminus \operatorname{int}(\nu(L)\cup N(B'))\) is obtained from \(S^3 \setminus \operatorname{int}(\nu(L)\cup N(B))\) by gluing $F\times I$ (that is, $k$ copies of $D^2\times I$) along $k$ rectangles in \(\partial_v N(B).\) The conclusion follows from \Cref{disk}.
\end{proof}

\begin{corollary}\label{transversal-2}
There exists a closed curve in $S^3\setminus  \operatorname{int}(\nu(L))$ positively transverse to $B'$, intersecting every branch sector of $B'$.
\end{corollary}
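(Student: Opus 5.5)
The plan is to copy the argument of \Cref{transversal}, substituting \Cref{disk-2} for \Cref{disk}. There are three steps.

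First, for each branch sector of $B'$ we pick a point in its interior and a short arc through that point, positively transverse to $B'$ with respect to the co-orientation from \Cref{cooriented-2}, and meeting $B'$ only at that point. We can take the arc to be a subarc of an $I$-fiber of $N(B')$ extended slightly past $\partial_h N(B')$ on both sides. Then its initial endpoint lies on the negative side and its terminal endpoint on the positive side, and both endpoints lie in the complement $S^3\setminus \operatorname{int}(\nu(L)\cup N(B'))$.

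Second, we use connectivity of the complement. By \Cref{disk-2}, this complement is homeomorphic to $D^2\times I$, so it is connected. Every component of $\partial_h N(B')$ lies on its boundary, hence every endpoint of the chosen arcs lies in one connected region. List the branch sectors in any order. Join the terminal endpoint of the $j$-th arc to the initial endpoint of the $(j+1)$-st arc by a path inside this complement, indices taken cyclically. These connecting paths are disjoint from $B'$ and so create no new intersections, and in particular no negatively oriented ones. The concatenation is a closed curve in $S^3\setminus \operatorname{int}(\nu(L))$. It meets $B'$ only at the chosen points, always in the positive direction, and it meets every branch sector.

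Third, we check that the transverse arcs are legitimate near the branch locus and near $\partial\nu(L)$. We choose the points in the interiors of the sectors, away from the branch locus and from $\partial\nu(L)$. Then each short arc misses $\nu(L)$, and a small perturbation makes the closed curve embedded and keeps it transverse.

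The only step with any content is the second. It requires that the complementary region of $N(B')$, not merely that of $N(B)$, is connected and touches both sides of every sector. \Cref{disk-2} supplies this, since gluing the product pieces $F\times I$ keeps the complement a single ball. Once that is available, the rest is routine.
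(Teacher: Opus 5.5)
Your proposal is correct and is essentially the paper's own argument. The paper proves this corollary by noting that Corollary~\ref{transversal} used only co-orientability and connectivity of the complement, and that both carry over to $B'$ by \Cref{cooriented-2} and \Cref{disk-2}; you have written out that same argument explicitly.
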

\begin{proof}
    Since \Cref{transversal} is a consequence of \Cref{cooriented} and \Cref{disk}, and since both \Cref{cooriented-2} and \Cref{disk-2} hold, we conclude that \Cref{transversal-2} holds as well.
\end{proof}

Since $L$ is non-split and non-trivial, $S^3\setminus  \operatorname{int}(\nu(L))$ is an irreducible, orientable $3$-manifold whose boundary is a union of incompressible tori. Now, we show that $\partial \nu(L)\setminus B'$ is a union of bigons.

\begin{theorem}\label{bigon}
    $\partial \nu(L)\setminus B'$ is a union of bigons.
\end{theorem}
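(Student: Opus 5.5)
The plan is to reduce the statement to what happens on each boundary torus $T_j=\partial\nu(L_j)$, and to identify the complementary regions of the train track $\partial B'$ in $T_j$ with pieces of the sutured manifold structure from \Cref{disk-2}.

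First I would describe $\partial B'\cap T_j$ concretely. The branched surface $B$ meets $\partial\nu(L)$ only near level $t\in[0,1]$. Along an over-arc or under-arc of $L$, $B$ leaves the torus in a family of sheets $\{(p,t): t\ge h(p)\}$ with $p$ running along $\tau$. Near a crossing, the under-strand at height $0$ is met by the mix-type segment and by the over-strand sheet coming down. Near each chosen point $p$, the vertical connecting arc $\{p\}\times[0,1]$ contributes a further piece. So on $T_j$, $\partial B$ consists of a longitudinal train route following $L_j$, possibly with several parallel strands, together with short branches. Each short branch is created at a crossing where $L_j$ is the under-strand and the sheet of the over-strand is cut off by $\nu(L)$. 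Splitting along $F$ only modifies $B$ near the rows of $\tau$ carrying $\gamma_i\times[1,3]$. On $\partial\nu(L)$ this doubles those longitudinal routes along the arcs where $\gamma_i$ runs on an under-type arc at height $\le 1$, and it splits the corresponding switches.

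Second, I would identify each component of $T_j\setminus\partial B'$ with a rectangle of $\overline{\partial\nu(L)\setminus N(B')}$ from \Cref{disk-2}, after collapsing $N(B')$ onto $B'$. Each such rectangle is a disk. Its boundary alternates between horizontal boundary (arcs of $\partial B'$ lying in $\partial_hN(B')$) and vertical boundary (cusps coming from $\partial_vN(B')\cap\partial\nu(L)$). I would prove that every component has exactly two cusps, which makes it a bigon. The co-orientation (\Cref{cooriented-2}) pins down where the cusps sit: each rectangle inherits a top side in $R_+$ and a bottom side in $R_-$. I would then check, one local model at a time (along an over-arc, along an under-arc, at a crossing, at a chosen point $p$), that a region is bounded by one arc of the top horizontal boundary and one arc of the bottom horizontal boundary, and that these meet at two cusps.

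Third, I must rule out regions that are not disks, namely annuli running parallel to $L_j$ with no cusp at all. This is where Properties~\ref{G5}, \ref{G6} and~\ref{G8} should enter. \ref{G8} supplies, along every under-arc, a point not on any $\gamma_i$, so the unsplit sheet there forces a cusp and cuts each longitudinal strip. \ref{G5} and \ref{G6} provide the same cutting at the top and bottom strips created by $\gamma_1$ and $\gamma_k$. \ref{G7} handles the strips lying between $\gamma_i$ and $\gamma_{i+1}$. I would also use the minimality of the chosen points: every arc meets a crossing of the opposite kind, so each component $L_j$ does encounter a cusp-producing crossing.

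I expect this third step to be the main obstacle. It requires careful bookkeeping of how the splitting disks $\widetilde D_i$ meet $\partial\nu(L)$, and of which longitudinal strips they create. My first attempt would be to follow each strip along $L_j$ and show that it must pass a point supplied by \ref{G5}–\ref{G8}, where it terminates in a cusp.
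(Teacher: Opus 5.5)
Your proposal has a genuine gap, and it is where you anticipated it: Step~3 is a plan rather than an argument, and the tool you choose for it cannot work. Properties \ref{G5}--\ref{G8} only control where the curves $\gamma_i$ run in $\tau$; they are there to exclude sink disks, and they do not create cusps of $\partial B'$. Along an under-arc of $D$, every sheet of $B$ starts at height $h=0$ on top of the under-strand, while the splitting disks $\widetilde D_i$ sit at heights $t\ge 1$. So the boundary train track on the torus around an under-strand is a single smooth strand with no switch, whether or not a point supplied by \ref{G8} is present. Your Step~1 has this inverted: switches of $\partial B$, and the places where $\widetilde D_i$ reaches $\partial\nu(L)$, occur on the torus around the \emph{over}-strand, since over-type arcs have $h=1$. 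Consequently, following a longitudinal strip to a \ref{G8}-point produces no cusp. Likewise, checking local models one at a time cannot show that a region spanning many local models closes up with exactly two cusps; some global input is needed. In fact the statement does not depend on \ref{G5}--\ref{G8} at all.

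The missing idea is to push your Step~2 through globally, which is what the paper does. By \Cref{disk-2}, $\overline{\partial\nu(L)\setminus N(B')}\cup\partial_vN(B')$ is a single annulus $\partial D^2\times I$. The interfaces between its two kinds of pieces are the $I$-fibers of $\partial_vN(B')\cap\partial\nu(L)$, i.e.\ the cusps, and each runs from $R_-$ to $R_+$.

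\begin{itemize}
\item If at least one such fiber exists, these disjoint spanning arcs cut the annulus into rectangles with exactly two cusp sides each. So every complementary region of $\partial B'$ is a bigon, with no case analysis needed.
\item If none exists, the annulus is a single piece. It must be $\overline{\partial\nu(L)\setminus N(B')}$, so $\partial_vN(B')=\emptyset$ and the branch locus of $B'$ is empty. By co-orientability, $\partial_hN(B')$ consists of two parallel copies of $B'$. Since $\partial_hN(B')=D^2\times\partial I$, $B'$ is a disk. Its boundary is essential, because its complement in $\partial\nu(L)$ is the annulus $\partial D^2\times I$. Thus $B'$ would be a compressing disk for $\partial\nu(L)$. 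This contradicts incompressibility, which holds because $L$ is non-split with a knotted component.
\end{itemize}

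This short argument replaces your Step~3 entirely.
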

\begin{proof}
By \Cref{disk-2},
\(
\overline{\partial \nu(L)\setminus  N(B')}
\cup
\partial_v N(B')
 \) is homeomorphic to an annulus $\partial D^2 \times I$. If $\partial \nu(L)\setminus B'$ is not a union of bigons, then $\overline{\partial \nu(L)\setminus N(B')}$ must be an annulus and $\partial_v N(B')$ must be empty. In this case, the branch locus of $B'$ is empty, and $B'$ is homeomorphic to a nontrivial compressing disk for $\partial \nu(L)$, which yields a contradiction.
\end{proof}

Now, we are ready to prove that $B'$ is a laminar branched surface.
    
\begin{theorem}\label{laminar}
The branched surface $B'$ is laminar.
\end{theorem}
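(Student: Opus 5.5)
We verify conditions (a)--(d) of Definition~\ref{def-laminar} in turn. Most of them follow from the fact that the complementary sutured manifold is a product, while the absence of sink disks is where Properties~\ref{G1}--\ref{G8} are really used.

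Conditions (a)--(c) will all be read off from \Cref{disk-2}. Write $M=S^3\setminus\operatorname{int}(\nu(L))$. By \Cref{disk-2}, $M\setminus\operatorname{int}(N(B'))$ is a $3$-ball whose boundary splits as follows: the annulus $\partial D^2\times I$ consists of $\partial_v N(B')$ together with $\overline{\partial\nu(L)\setminus N(B')}$, and $\partial_h N(B')$ consists of the two disks $D^2\times\{0,1\}$.

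For (b), a ball is irreducible. Moreover, $\partial M\setminus \operatorname{int}(N(B'))$ is a union of rectangles by \Cref{bigon}, hence incompressible.

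For (a), the horizontal boundary consists of two disks in a product ball, which are incompressible. A $\partial$-compression or a monogon would require a disk meeting the suture annulus $\partial D^2\times I$ in an essential arc with both endpoints on the same horizontal side. In a product sutured ball, every such arc is inessential, so no $\partial$-compression and no monogon exist. For the last clause of (a), I will argue that no component of $\partial_h N(B')$ is a properly embedded disk or a sphere in $M$. The branch locus is nonempty by \Cref{bigon}, and each horizontal disk meets $\partial_v N(B')$, so it is not properly embedded in $M$.

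Condition (c) follows from \Cref{transversal-2}. A closed curve positively transverse to $B'$ and meeting every branch sector rules out Reeb branched surfaces, since a Reeb branched surface would carry a torus bounding a solid torus; a transverse loop crossing it would have to enter that solid torus but could never leave with the correct co-orientation. The same transverse curve also rules out trivial bubbles, so no elimination is needed.

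The main obstacle is (d): no sink disk and no half sink disk. My plan is to describe the branch sectors of $B'$ explicitly using the curves $\gamma_i$ and the branch directions coming from the construction of $B$. The branch directions in the $\tau\times[0,2]$ part point into sectors over the regions of $S^1\times I$. In the third level, the splitting along $\widetilde D_i$ creates new branch curves along $\gamma_i\times[1,3]$, where the branch direction points from the split sheet toward the complement. I then classify the disk sectors of $B'$ and, for each one, exhibit a boundary arc of its branch locus with direction pointing out of it, or an arc on $\partial\nu(L)$ for the half sink case.

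The properties are used as follows. For the sectors between consecutive split disks $\widetilde D_i$ and $\widetilde D_{i+1}$, Property~\ref{G7} supplies the vertical segment where one side runs along an under-type arc. This produces an outward-pointing branch direction, because the under-type part lies at height $0$ below the level-$1$ sheets. Properties~\ref{G5} and~\ref{G6} handle the top sector above $\widetilde D_1$ and the bottom sector below $\widetilde D_k$ in the same way. Property~\ref{G8} ensures that the sectors coming from under-arcs of $D$ (the vertical walls $\tau\times[0,1]$ over under-type arcs) retain a point not covered by any split disk. There the wall meets $\partial\nu(L)$ or has a non-inward branch edge, so it is not a half sink disk.

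The over-arc sectors are handled separately. Each over-arc crosses some under-arc by the minimality assumption, which gives a branch edge pointing outward. For the sectors in the top disk $D^2$, the disk is split into $k+1$ pieces separated by the $\widetilde D_i$, and each piece inherits an outward direction from Properties~\ref{G5}--\ref{G7}. I expect the bookkeeping of branch directions along the quotient $\sim$, that is, at the bigon collapses in the third level, to be the most delicate part, and I would handle it by a local picture at a single bigon, then globalize.
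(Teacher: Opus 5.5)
Your treatment of (a)--(c) follows the paper, apart from the trivial-bubble step noted at the end. Your plan for (d) has a genuine gap. You propose to exclude half sink disks by exhibiting ``an arc on $\partial\nu(L)$'', and you dismiss the under-arc walls because they meet $\partial\nu(L)$. That criterion is backwards. A half sink disk is precisely a disk sector whose boundary consists of arcs in $\partial\nu(L)$ and inward-pointing branch arcs, so every sector, including those touching $\partial\nu(L)$, must be shown to have an \emph{outward}-pointing branch arc.

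For the wall over an under-arc this matters. Wherever some $\gamma_i$ runs over it, the wall is capped at height $1$ by the bottom edge of the split along $\widetilde D_i$, and that branch direction points down, \emph{into} the wall. (The new branch curves lie along these bottom edges, not along the walls $\gamma_i\times[1,3]$.) Property~\ref{G8} does not help by making the wall touch $\partial\nu(L)$. What it gives is a path from the wall up past height $1$ that misses $F$, and hence the branch locus of $B'$. So the sector is not confined to level $[0,1)$, and its outward arc has to be found higher up.

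Your over-arc argument also breaks after splitting. Take a switch where $\gamma_i$ runs from the mix-type arc onto an over-type branch, and the second branch at that switch lies above it. There the layer of the over-type wall below $\widetilde D_i$ passes into the mix-type wall without branching; the branch line of $B'$ joins the other wall to the layer above $\widetilde D_i$. So the crossing supplied by minimality need not give that sector an outward arc. Since the classification of sectors of $B'$ is only promised, these two steps carry the argument, and both fail as stated.

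The missing idea is the paper's path argument, which avoids classifying sectors altogether. For each $x'\in B'$, build a path in $B'$ from $x'$ that meets the branch locus at least once and crosses it only in the branch direction. Its first crossing is then an outward arc of the sector containing $x'$. The path has three stages.
\begin{enumerate}
\item If $x'$ lies below level $1$, use Property~\ref{G8} to climb to level $[1,2]$ without meeting $F$.
\item Rise along $s\mapsto(p,\max(t,3s))$, keeping the height relative to $F$, into one of the $k+1$ pieces of the split attached disk. This crosses only pinch curves, and only upward, which is their branch direction.
\item Descend from that piece along the wall over the point $p$ given by Property~\ref{G5}, Property~\ref{G6}, or the under-type endpoint in Property~\ref{G7}. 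This meets the branch locus of $B'$ exactly once, at $p\times\{1\}$, pointing out of that sheet.
\end{enumerate}
The side conditions in those properties (no bigon directly above or below $p$; the other endpoint lying inside the other curve's bigon) keep this descent off pinch curves in the relevant sheet. This handles under-arc walls, over-arc walls and disk pieces uniformly.

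Separately, your claim that the transverse curve also rules out trivial bubbles is false in general. A plane with a tangent cap, both co-oriented upward, bounds a trivial bubble, yet a vertical line crosses both of its sectors positively. The correct reason is the paper's: the only complementary region is the $D^2\times I$ of \Cref{disk-2}, whose $D^2\times\partial I$ is all of $\partial_h N(B')$, so a generic $I$-fiber meets it twice.
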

\begin{proof}
We check the conditions for $B'$ to be a laminar branched surface one by one.

\begin{claim}
$B'$ contains no trivial bubble.
\end{claim}
\begin{proof}
    Recall that a trivial bubble is a $D^2 \times I$ region in $S^3 \setminus \operatorname{int}(\nu(L)\cup N(B'))$ such that each $I$-fiber of $N(B')$ intersects $\operatorname{int}(D^2) \times \partial I$ in at most one point. By \Cref{disk-2}, $S^3 \setminus\operatorname{int}(\nu(L)\cup N(B'))$ is homeomorphic to $D^2 \times I$, whose corresponding \(D^2 \times \partial I\) is \(\partial_h N(B')\). Therefore, almost all $I$-fibers of $N(B')$ intersect $\operatorname{int}(D^2) \times \partial I$ in two points. This contradicts the defining property of a trivial bubble.
\end{proof}

\begin{claim}
$\partial_h N(B')$ is incompressible and $\partial$-incompressible in \(S^3 \setminus \operatorname{int}(\nu(L)\cup N(B'))\).
\end{claim}
\begin{proof}
This is a direct corollary of \Cref{disk-2}.
\end{proof}
\begin{claim}
There is no monogon in \(S^3 \setminus \operatorname{int}(\nu(L)\cup N(B'))\).
\end{claim}
\begin{proof}
This is a direct corollary of \Cref{cooriented-2}.
\end{proof}
\begin{claim}
No component of $\partial_h N(B')$ is a sphere or a properly embedded disk in
$S^3 \setminus \operatorname{int}(\nu(L))$.
\end{claim}

\begin{proof}
By \Cref{disk-2}, the horizontal boundary $\partial_h N(B')$ is homeomorphic to the disjoint union of two disks, one on the positive side and the other on the negative side. In particular, no component of $\partial_h N(B')$ is a sphere.

By \Cref{bigon}, $\partial \nu(L)\setminus B'$ is a union of bigons. Near any vertex of a bigon, the two components of $\partial_h N(B')$ are not properly embedded in $S^3 \setminus \operatorname{int}(\nu(L))$.
\end{proof}
\begin{claim}
    \(S^3 \setminus \operatorname{int}(\nu(L)\cup N(B'))\) is irreducible and $\overline{\partial \nu(L)\setminus  N(B')}$ is incompressible in \(S^3 \setminus \operatorname{int}(\nu(L)\cup N(B'))\).
\end{claim}
\begin{proof}
    This is a direct corollary of \Cref{disk-2} and \Cref{bigon}.
\end{proof}
\begin{claim}
    $B'$ contains no Reeb branched surface, as defined in \cite{gabai1989essential}.
\end{claim}
\begin{proof}
    By \cite[Remark~1.3]{gabai1989essential}, $B'$ contains no Reeb branched surface provided that it is transversely recurrent, that is, there exists a closed curve efficiently transverse to $B'$ intersecting every branch sector of $B'$. Since any positively transverse curve is efficiently transverse, by \Cref{transversal-2}, $B'$ contains no Reeb branched surface.
\end{proof}
\begin{claim}
    $B'$ has no sink disk or half sink disk.
\end{claim}
\begin{proof}
By definition, a sink disk or a half sink disk is a branch sector of $B'$ such that
\begin{enumerate}[label=(\alph*)]
    \item it is homeomorphic to a disk, and
    \item its boundary consists only of arcs in $B' \cap \partial \nu(L)$ and arcs in the branch locus whose branch directions point inward.
\end{enumerate}

For each point $x' \in B'$, we are going to construct a curve $\gamma:[0,1]\to B'$ with $\gamma(0)=x'$ such that it meets the branch locus at least once, and whenever $\gamma$ meets the branch locus of $B'$, it follows the branch direction. Then, by definition, $x'$ cannot lie in the interior of a sink disk or a half sink disk.

By definition, if the splitting surface $F$ intersects an $I$-fiber $\{x\}\times I$ ($x\in B$) of $N(B)$ in $k$ points, then the point $x\in B$ splits into $k+1$ points $x' \in B'$. Thus, a point $x'\in B'$ is determined by a point $x\in B$ together with its relative height with respect to $F$.

We first consider the case where the corresponding point $x\in B$ lies in $S^1\times I\times [0,1)$. This is a region where no quotient map or splitting operation is performed. The set $B'\cap (S^1\times I\times [0,1))$ is a union of disks, each corresponding to an under-arc of $D$. By Property~\ref{G8}, there exists a curve in $B$ from $x$ to some point in $S^1\times I\times [1,2]$ that does not intersect the splitting surface $F$. This corresponds to a curve in $B'$ that does not intersect any branch locus. Therefore, we reduce the problem to the next case, where the corresponding point $x\in B$ lies in $S^1\times I\times [1,3]/\sim$.

Then, we assume that the corresponding point $x\in B$ lies in $S^1\times I\times [1,3]/\sim$. Let $(p,t) \in S^1 \times I\times [1,3]$ denote the point corresponding to $x'\in B'$. The curve $\gamma_0(s) = (p,\max(t,3s))$, where $s\in [0,1]$, together with an invariant relative height with respect to $F$, defines a curve from $x'$ to a point in one of the $k+1$ disks obtained by splitting the attached disk in the construction of $B$. Whenever this curve meets a branch locus of $B'$, it follows the branch direction. Therefore, we reduce the problem to the final case, where the corresponding point $x\in B$ lies in the attached disk in the construction of $B$.

Finally, we assume that the corresponding point $x\in B$ lies in the attached disk in the construction of $B$. Depending on the relative height with respect to $F$, the point $x'\in B'$ lies in one of the $k+1$ disks obtained by the splitting operation.

If $x'\in B'$ lies on the topmost disk, we choose a point $p\in \gamma_1\subset S^1\times I$ using Property~\ref{G5}. The curve $((p\times [0,3])/\sim)\cap B$ in $B$ lifts to a curve in $B'$, and it intersects the branch locus of $B'$ exactly once at $p\times \{1\}$, where the branch direction points outward from the topmost disk. Therefore, there exists a curve $\gamma$ in $B'$ starting at $x'$ such that it meets at least one branch locus, and whenever $\gamma$ meets a branch locus of $B'$, it follows the branch direction.

If $x'\in B'$ lies on the bottommost disk, then we similarly choose a point $p\in \gamma_k\subset S^1\times I$ using Property~\ref{G6}. The same conclusion follows.

If $x'\in B'$ lies between $\widetilde{D}_i$ and $\widetilde{D}_{i+1}$ for some $1\le i\le k-1$, we choose points $p_1\in \gamma_{i}\subset S^1\times I$ and $p_2\in \gamma_{i+1}\subset S^1\times I$ using Property~\ref{G7}. Let $p\in \{p_1,p_2\}$ be the point lying on an under-arc of $\tau$. The same conclusion follows.

Therefore, $B'$ has no sink disk or half sink disk.
\end{proof}

By Definition~\ref{def-laminar}, $B'$ is a laminar branched surface.
\end{proof}
\section{Curves for the splitting surface}\label{sec:curve}
In this section, we are going to construct simple closed curves \(\gamma_1, \gamma_2, \ldots, \gamma_k \subset S^1 \times I\) satisfying Properties~\ref{G1}--\ref{G8}.

\subsection{The brick diagram}
Consider the brick diagram associated to the positive braid diagram $D\subset S^1\times I$, drawn horizontally. It is a diagram in $S^1\times I$ obtained by replacing each crossing with a vertical segment connecting two adjacent rows. 

A row in the brick diagram is a horizontal circle in $S^1\times I$. The $i$-th row (counted from top to bottom) corresponds to the set of points $p$ in the braid diagram $D$ such that there are $i-1$ strands directly above $p$. Each bigon complementary region of $\tau$ is represented by an innermost rectangle in the brick diagram, which we call a \emph{brick}.

Each crossing in $D$ corresponds to a vertical segment in the brick diagram, which in turn determines two vertices. We call the top endpoint of the segment an \emph{up-vertex} and the bottom endpoint a \emph{down-vertex}. For each vertex $p$ of the brick diagram, let $\opp(p)$ denote the other vertex corresponding to the same crossing. Let $[p,\opp(p)]$ denote the vertical segment joining $p$ and $\opp(p)$.

For two points $p_1, p_2$ on the same row in the brick diagram, we consider the oriented row from left to right. Let $[p_1,p_2]$ denote the horizontal segment obtained by traveling from $p_1$ to $p_2$ in this direction; if $p_1=p_2$, we traverse the entire row once. Let $d_{\mathrm{up}}(p_1,p_2)$ denote the number of up-vertices on $[p_1,p_2]$, excluding the endpoints. Similarly, let $d_{\mathrm{down}}(p_1,p_2)$ denote the number of down-vertices on $[p_1,p_2]$, again excluding the endpoints. These functions are analogous to those defined in \cite[Definition~3.7]{krishna2025taut}.

For each point $p$ on a row in the brick diagram, if $d_{\mathrm{up}}(p,p)\ge 1$, let $L_{\mathrm{up}}(p)$ denote the nearest up-vertex to the left of $p$ on its row, and let $R_{\mathrm{up}}(p)$ denote the nearest up-vertex to the right of $p$ on its row. Similarly, if $d_{\mathrm{down}}(p,p)\ge 1$, let $L_{\mathrm{down}}(p)$ denote the nearest down-vertex to the left of $p$ on its row, and let $R_{\mathrm{down}}(p)$ denote the nearest down-vertex to the right of $p$ on its row.

The brick diagram determines the braid word as follows. Scan the brick diagram from left to right. Whenever a vertical segment connecting the $i$-th row to the $(i+1)$-st row is encountered, we write down the generator $\sigma_i$. Listing these generators in the order they appear produces the braid word associated to the diagram.

\subsection{Construction of the curves}
We describe a linear-complexity greedy algorithm that produces simple closed curves $\gamma_1,\ldots,\gamma_k$ satisfying Properties~\ref{G1}--\ref{G8}. The precise construction is given in Algorithm~\ref{alg:brick-curve}. The local moves used in the construction are summarized in Algorithm~\ref{alg:local-procedures}. Figure~\ref{fig:brick} illustrates the construction on a sample brick diagram.

\begin{figure}[!htbp]
  \centering
  \includegraphics[width=8cm]{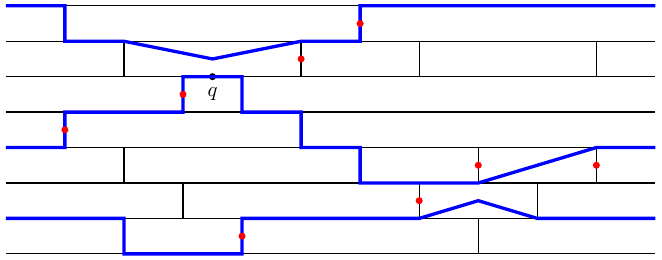}
  \caption{Blue curves represent the output; red dots represent Type~X crossings.}

  \label{fig:brick}
\end{figure}

\begin{algorithm}[!htbp]
\caption{Construction of the curves}
\label{alg:brick-curve}
\begin{algorithmic}[1]
\Require A positive braid diagram $D\subset S^1\times I$ and its associated brick diagram
\Ensure Simple closed curves satisfying Properties~\ref{G1}--\ref{G8}

\State Choose a non-vertex point $q$ in the $i$-th row lying on a component of $L$ which is not the unknot, with $i$ minimal.
\State $A\gets q$, $B\gets q$, $\gamma\gets\{q\}$, $\Gamma\gets\varnothing$
\While{$d_{\mathrm{up}}(B,A)=2$}
    \State \Call{StepBelow}{$A,B,\gamma$}
\EndWhile
\State \Call{CrossBrickStep}{$A,B,\gamma$} \label{line:CrossBrickStep}
\While{$A$ is not on the last row}
    \If{$d_{\mathrm{up}}(B,A)\ge2$}
        \State \Call{StepBelow}{$A,B,\gamma$}
    \Else
        \State \Call{CloseAndStore}{$\gamma,\Gamma,[B,A]$} \label{line:close-1}
        \State \Call{RestartBelow}{$A,B,\gamma$} \label{line:restart-below}
    \EndIf
\EndWhile
\State \Call{CloseAndStore}{$\gamma,\Gamma,[B,A]$}\label{line:close-2}
\If{$q$ is not on the first row}
    \State $A\gets q$, $B\gets q$
    \State \Call{RestartAbove}{$A,B,\gamma$}\label{line:restart-above1}
    \While{$A$ is not on the first row}
        \If{$d_{\mathrm{up}}(A,B)\ge2$}
            \State \Call{StepAbove}{$A,B,\gamma$}
        \Else
            \State \Call{CloseAndStore}{$\gamma,\Gamma,[A,B]$}\label{line:close-3}
            \State \Call{RestartAbove}{$A,B,\gamma$}\label{line:restart-above2}
        \EndIf
    \EndWhile
    \State \Call{CloseAndStore}{$\gamma,\Gamma,[A,B]$}\label{line:close-4}
\EndIf

\State \Return $\Gamma$
\end{algorithmic}

\end{algorithm}

\begin{algorithm}[!htbp]
\caption{Local procedures used in the Algorithm~\ref{alg:brick-curve}}
\label{alg:local-procedures}
\begin{algorithmic}[1]

\Procedure{CrossBrickStep}{$A,B,\gamma$}
    \State $A' \gets L_{\mathrm{up}}(A)$,\quad $B' \gets R_{\mathrm{up}}(B)$
    \State $A'' \gets L_{\mathrm{up}}(A')$
    \State Extend $\gamma$ along $[A',A]$, $[B,B']$, and $[B',\opp(B')]$
    \State Extend $\gamma$ inside the brick below $[A'',A']$ along an arc from $\opp(A'')$ to $A'$
    \State $(A,B) \gets (\opp(A''),\,\opp(B'))$
\EndProcedure

\Procedure{StepBelow}{$A,B,\gamma$}
    \State $A' \gets L_{\mathrm{up}}(A)$,\quad $B' \gets R_{\mathrm{up}}(B)$
    \State Extend $\gamma$ along $[A',A]$, $[B,B']$, $[A',\opp(A')]$, and $[B',\opp(B')]$
    \State $(A,B) \gets (\opp(A'),\,\opp(B'))$
\EndProcedure

\Procedure{StepAbove}{$A,B,\gamma$}
    \State $A' \gets R_{\mathrm{down}}(A)$,\quad $B' \gets L_{\mathrm{down}}(B)$
    \State Extend $\gamma$ along $[B',B]$, $[A,A']$, $[A',\opp(A')]$, and $[B',\opp(B')]$
    \State $(A,B) \gets (\opp(A'),\,\opp(B'))$
\EndProcedure

\Procedure{CloseAndStore}{$\gamma,\Gamma,\mathrm{Seg}$}
    \State Extend $\gamma$ along $\mathrm{Seg}$ to close $\gamma$
    \State $\Gamma \gets \Gamma \cup \{\gamma\}$
\EndProcedure

\Procedure{RestartBelow}{$A,B,\gamma$}
    \State $R \gets$ the brick directly below $A$
    \State $A \gets$ the lower-left vertex of $R$
    \State $B \gets$ the lower-right vertex of $R$
    \State $\gamma \gets$ an arc inside $R$ from $A$ to $B$
\EndProcedure

\Procedure{RestartAbove}{$A,B,\gamma$}
    \State $R \gets$ the brick directly above $A$
    \State $A \gets$ the upper-right vertex of $R$
    \State $B \gets$ the upper-left vertex of $R$
    \State $\gamma \gets$ an arc inside $R$ from $B$ to $A$
\EndProcedure

\end{algorithmic}
\end{algorithm}

During the construction, we maintain two points $A$ and $B$ on a common row, along with a path $\gamma$ connecting $A$ and $B$ in $S^1\times I$. The path $\gamma$ has already been constructed above or below this row, and the next modification of $\gamma$ is determined by the number of up-vertices (or down-vertices) inside $[B,A]$ (or $[A,B]$).

The construction proceeds in three phases: (i) locating the middle brick to cross, (ii) downward expansion to the last row, and (iii) upward expansion to the first row.

\subsubsection{Phase (i): locating the middle brick to cross.}
We start from a non-vertex point $q$ on a component of $L$ which is not the unknot, chosen on the topmost row met by this component. We set $A=B=q$ and initialize $\gamma$ to be the constant path at $q$. As long as $d_{\mathrm{up}}(B,A)=2$, the procedure \textsc{StepBelow} extends $\gamma$ by moving $A$ to the nearest up-vertex on the left and $B$ to the nearest up-vertex on the right, and then descending along the corresponding vertical segments to the paired down-vertices.

Once $d_{\mathrm{up}}(B,A)\neq 2$, as we will prove later in Subsection~\ref{sec:simple}, we must have $d_{\mathrm{up}}(B,A)\ge 3$. The procedure \textsc{CrossBrickStep} extends $\gamma$ by moving $A$ to the nearest up-vertex on the left and then passing through the interior of the brick toward its bottom-left corner, while moving $B$ to the nearest up-vertex on the right and descending along the corresponding vertical segment.

After this step, both $A$ and $B$ are down-vertices, and the construction enters the downward phase.

\subsubsection{Phase (ii): downward expansion to the last row.}
If $d_{\mathrm{up}}(B,A)\ge 2$, we extend $\gamma$ further downward by applying \textsc{StepBelow} and continue the downward expansion.

If instead $d_{\mathrm{up}}(B,A)\le 1$, we close the current path $\gamma$ by adding the segment $[B,A]$ and store it in the output collection $\Gamma$. If this is not the last row, we start a new path inside the brick directly below $A$ (procedure \textsc{RestartBelow}), setting $(A,B)$ to the lower-left and lower-right vertices of that brick and letting $\gamma$ be an interior arc from $A$ to $B$. Otherwise, the downward expansion terminates.

\subsubsection{Phase (iii): upward expansion to the first row.}
This phase is symmetric to the downward phase. If $q$ is not on the first row, we restart a path $\gamma$ inside the brick directly above $q$ (procedure \textsc{RestartAbove}) and then iterate upward.

\subsection{The output curves are simple}\label{sec:simple}

In this subsection, we prove that Algorithm~\ref{alg:brick-curve} produces simple closed curves. During construction, every step is performed on a single row, proceeding first downward from the initial point $q$ and then upward from $q$. Therefore, it suffices to show that the construction on each individual row produces no self-intersection.

There are two statements to verify:
\begin{itemize}
    \item The two vertical segments of each brick are distinct. Hence, after \textsc{RestartBelow} (line~\ref{line:restart-below}) and \textsc{RestartAbove} (lines~\ref{line:restart-above1} and~\ref{line:restart-above2}), the vertices $A$ and $B$ are distinct.
    \item Before \textsc{CrossBrickStep} (line~\ref{line:CrossBrickStep}), $d_{\mathrm{up}}(B,A)\ge 3$. Therefore, the segment added during \textsc{CrossBrickStep} also cannot create a self-intersection.
\end{itemize}

We prove these two statements using braid group relations by reading a braid word from the brick diagram. For the first statement, we establish the following lemma. This proof follows the same argument as \cite[Lemma~4.1]{krishna2025taut} and can be visualized by the isotopy shown in Figure~\ref{fig:strand-1}.

\begin{figure}[!htbp]
\centering

\newcommand{\casefig}[1]{%
\includegraphics[width=\linewidth]{#1}%
}

\begin{tabular}{c c c}
\begin{minipage}{0.44\textwidth}\centering
\casefig{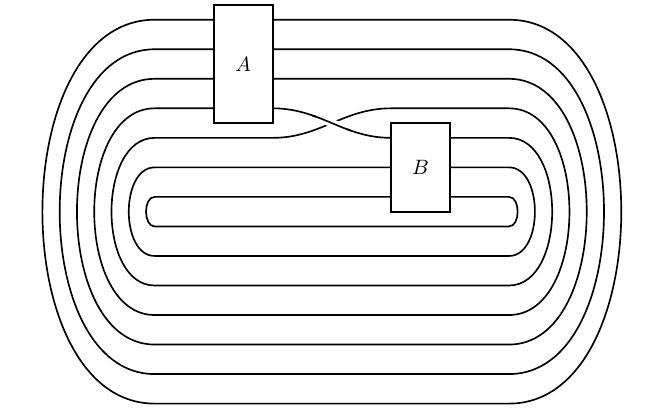}
\end{minipage}
&
$\cong$
&
\begin{minipage}{0.44\textwidth}\centering
\casefig{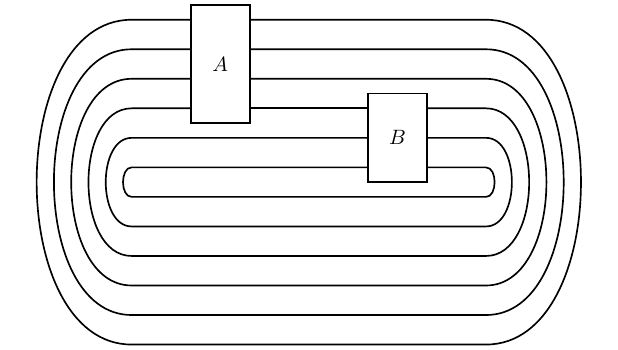}
\end{minipage}
\end{tabular}

\caption{Illustration of the isotopy used in Lemma~\ref{strand-1}.}
\label{fig:strand-1}
\end{figure}

\begin{lemma}\label{strand-1}
Let $\beta$ be a positive braid word on $m$ strands. Suppose there exists an index $i$ such that $\sigma_i$ appears only once in $\beta$. Then there exists a positive braid word $\beta'$ on $m-1$ strands such that the closures $\widehat\beta$ and $\widehat{\beta'}$ represent the same link in $S^3$.
\end{lemma}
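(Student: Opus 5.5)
We prove this by manipulating the braid word, using conjugation and braid relations (both preserve the closure) together with one Markov destabilization. Since $\widehat\beta$ is unchanged under cyclic permutation of $\beta$, we first rotate the word so that the unique occurrence of $\sigma_i$ is the last letter. This gives $\beta = w\,\sigma_i$, where $w$ is a positive word in $\sigma_1,\dots,\sigma_{i-1},\sigma_{i+1},\dots,\sigma_{m-1}$.

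Next we separate $w$ into its two halves. The generators $\sigma_j$ with $j<i$ commute with those with $j>i$, because $|j-j'|\ge 2$ whenever $j<i<j'$. Using only these commutation relations, which keep the word positive, we rewrite $w = u\,v$. Here $u$ is a positive word in $\sigma_1,\dots,\sigma_{i-1}$, acting on strands $1,\dots,i$, and $v$ is a positive word in $\sigma_{i+1},\dots,\sigma_{m-1}$, acting on strands $i+1,\dots,m$. So $\beta = u\,v\,\sigma_i$.

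The main step is to slide the block $v$ so that the single crossing $\sigma_i$ joins strand $i$ to the edge of the $v$-block, where a destabilization applies. There are two ways to do this.

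\emph{Algebraic route.} Shift $v$ down one index by conjugation. Let $\delta = \sigma_{i+1}\sigma_{i+2}\cdots\sigma_{m-1}$. We would like a conjugation that turns $u\,v\,\sigma_i$ into $u\,\sigma_i\cdots$ with $v$ re-indexed, and the risk is that the relevant conjugating elements are not positive.

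\emph{Geometric route (Figure~\ref{fig:strand-1}).} This avoids the positivity issue, and is the route we would take. Because $\sigma_i$ occurs once, the closure $\widehat\beta$ is a connected sum of $\widehat u$ (on $i$ strands) and $\widehat v$ (on $m-i$ strands), joined by the single crossing, which forms a band. A band attached by one crossing between two braid closures lets us place the $v$-block on top of the $u$-block, with strands $i+1,\dots,m$ re-indexed as $i,\dots,m-1$. Concretely, we claim
\[
\widehat{u\,v\,\sigma_i} \;=\; \widehat{u\,v^{\downarrow}},
\]
where $v^{\downarrow}$ is $v$ with every $\sigma_j$ replaced by $\sigma_{j-1}$, viewed on $m-1$ strands. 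To justify this we follow the isotopy in the figure. We rotate the $v$-part of the closure $180^\circ$ around the axis through the $\sigma_i$ crossing; this undoes that crossing, in the spirit of a Markov destabilization, and merges strand $i$ of $u$ with strand $i+1$ of $v$. One must check that the rotation keeps all crossings positive and preserves braid form. It does, because rotating a positive braid closure by $\pi$ about an axis in the projection plane gives a positive braid closure again.

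To make this rigorous we would record the standard identity
\[
\sigma_{i}\,\sigma_{i+1}^{\pm}\cdots
\]
Alternatively, and more simply, we reduce by induction on $m-i$ to the Markov destabilization $\widehat{x\,\sigma_{m-1}} = \widehat{x}$ for $x$ on $m-1$ strands. The case $i=m-1$ is exactly this destabilization. In general, when the $v$-strands are rotated they become a braid, on strands $i+1,\dots,m$, that is attached to strand $i$ by a single positive crossing, and this is where the Markov move applies.

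The main obstacle is to carry out this rotation/isotopy cleanly in terms of words and to verify that the result is still a positive word on $m-1$ strands. We would present it as conjugation plus a single destabilization, checked on the picture in Figure~\ref{fig:strand-1}, with the resulting word being $\beta' = u\,v^{\downarrow}$.
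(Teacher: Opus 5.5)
Your reduction to the conjugate $u\,v\,\sigma_i$ (with $u$ in $\sigma_1,\dots,\sigma_{i-1}$ and $v$ in $\sigma_{i+1},\dots,\sigma_{m-1}$) is correct. Your target $\beta'=u\,v^{\downarrow}$ is also correct; it is exactly the word the paper arrives at.

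The gap is that the identity $\widehat{u\,v\,\sigma_i}=\widehat{u\,v^{\downarrow}}$ is never established, and it is the whole content of the lemma. Your three attempts at it do not close it:
\begin{itemize}
\item The rotation is not a proof. Turning the $\widehat v$-summand by $\pi$ about an axis through the $\sigma_i$ crossing can at best remove that crossing, leaving $\widehat u$ and $\widehat v$ joined by an untwisted band. It does not by itself produce a closed braid on $m-1$ strands in which the last strand of $u$ \emph{is} the first strand of $v$, and that further isotopy is exactly what must be justified.
\item The ``standard identity'' is left unfinished.
\item The induction on $m-i$ has no inductive step. You do not say how to pass from a unique $\sigma_i$ to a unique $\sigma_{i+1}$, and $v$ may contain many copies of $\sigma_{i+1}$.
\end{itemize}
As you say yourself, the main obstacle is left open.

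Your reason for abandoning the algebraic route is also mistaken. Conjugation by an arbitrary braid, positive or not, preserves the closure. The destabilization $\widehat{X\sigma_{m-1}Y}=\widehat{YX}$ holds for any words $X,Y$ in $\sigma_1^{\pm1},\dots,\sigma_{m-2}^{\pm1}$. Only the final word has to be positive.

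The paper conjugates by $\pi_i=\sigma_{m-1}\sigma_{m-2}\cdots\sigma_i$. It uses three relations: $\pi_i\sigma_j\pi_i^{-1}=\sigma_{j-1}$ for $j>i$; $\pi_{i+1}\sigma_j\pi_{i+1}^{-1}=\sigma_j$ for $j<i$; and $\pi_i\pi_{i+1}^{-1}=\pi_i^{-1}\sigma_{m-1}\pi_i$. For the conjugate $u\,\sigma_i\,v$ of your word, these give
\[
\pi_i(u\,\sigma_i\,v)\pi_i^{-1}=(\pi_i\pi_{i+1}^{-1})(\pi_{i+1}u\,\pi_{i+1}^{-1})(\pi_i v\,\pi_i^{-1})=\bigl(\sigma_i^{-1}\cdots\sigma_{m-2}^{-1}\,\sigma_{m-1}\,\sigma_{m-2}\cdots\sigma_i\bigr)\,u\,v^{\downarrow}.
\]
This word contains one $\sigma_{m-1}$ and no $\sigma_{m-1}^{-1}$. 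Destabilize it and conjugate back. The inverse letters then cancel against $\sigma_{m-2}\cdots\sigma_i$, leaving the positive word $u\,v^{\downarrow}$ on $m-1$ strands. This short computation is the missing step.
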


\begin{proof}
For each $k$, let $\pi_k$ denote the braid \(\pi_k := \sigma_{m-1}\sigma_{m-2}\cdots \sigma_{k}.\)
By the braid relations, we have
\[
\pi_k \sigma_j\pi_k^{-1}=
\begin{cases}
    \sigma_{j-1}, & \text{if } j>k,\\
    \sigma_{j}, & \text{if } j <k-1,
\end{cases}
\]
and
\[
\pi_k \pi_{k+1}^{-1}=\pi_{k}^{-1}\sigma_{m-1}\pi_{k}.
\]

By the braid relations, a conjugate of $\beta$ can be written as the product $W_0 \sigma_i W_1$, where $W_0$ is a positive word  in the generators $\sigma_1 ,\sigma_2,\ldots, \sigma_{i-1}$ and $W_1$ is a positive word in the generators $\sigma_{i+1},\sigma_{i+2},\ldots, \sigma_{m-1}$. Then we have
\begin{align*}
    \pi_{i} \beta \pi_{i}^{-1}
=&\pi_{i} W_0 \sigma_i W_1   \pi_{i}^{-1}\\
=&(\pi_i \pi_{i+1}^{-1})(\pi_{i+1} W_0\pi_{i+1}^{-1}) (\pi_{i}W_1 \pi_{i}^{-1})\\
=&(\pi_{i}^{-1}\sigma_{m-1}\pi_{i})  W_0 (\pi_{i}W_1 \pi_{i}^{-1}).
\end{align*}

The element $\pi_{i}W_1 \pi_{i}^{-1}$ can be written as a positive word in the generators $\sigma_{i},\sigma_{i+1},\ldots, \sigma_{m-2}$.
After canceling a $\sigma_{m-1}^{-1}\sigma_{m-1}$ pair, the word $\pi_i^{-1}\sigma_{m-1}\pi_i$ contains exactly one occurrence of $\sigma_{m-1}$ and no $\sigma_{m-1}^{-1}$.

Hence, $\pi_i \beta \pi_i^{-1}$ admits a destabilization along the last strand, which removes this $\sigma_{m-1}$ and produces a braid word $\beta'$ in the generators $\sigma_1,\sigma_2,\ldots,\sigma_{m-2}$.
The closures $\widehat{\beta}$ and $\widehat{\beta'}$ represent the same link in $S^3$.
\end{proof}

For the second statement, observe that each time \textsc{StepBelow} is performed, the vertices $A$ and $B$ move down by one row. Since there are only finitely many rows, once the first instance with $d_{\mathrm{up}}(B,A)\neq 2$ occurs, the algorithm enters line~\ref{line:CrossBrickStep}. To prove $d_{\mathrm{up}}(B,A)\ge 3$, it therefore suffices to rule out the cases $d_{\mathrm{up}}(B,A)=0$ and $d_{\mathrm{up}}(B,A)=1$.

Suppose that the initial point $q$ is on the $i$-th row, and \textsc{StepBelow} is performed $j$ times before entering line~\ref{line:CrossBrickStep}. By conjugation, we may assume that the braid word $\beta$ on $m$ strands is 
\[
\beta =
\sigma_{i+j-1}\, w_{-j+1}\cdots
\sigma_{i+1}\, w_{-1}\,
\sigma_i\, w_0\, \sigma_i\,
w_1\, \sigma_{i+1}\, w_2\cdots
w_{j-1}\, \sigma_{i+j-1}\, w_j ,
\]
where $w_{-j+1},\ldots,w_{-1},w_0,w_1,\ldots,w_j$ are positive words. 

By the condition that $d_{\mathrm{up}}(B,A)=2$ before each execution of \textsc{StepBelow}, prior to entering line~\ref{line:CrossBrickStep}, for $k=0,1,\ldots,j-1$ and for every $l$ satisfying $|l|\ge k$, the word $w_l$ contains no $\sigma_{i+k}$. By the condition that $i$ is minimal in the choice of the initial point $q$, the word $w_0$ contains no $\sigma_{i-1}$. 

For \(l=1,2,\dots, j-1\), the words \(w_{-l}\) and \(w_l\) are positive words in the generators
\[
\sigma_1,\sigma_2,\ldots, \sigma_{i-1}, \ \sigma_{i+l+1},\sigma_{i+l+2},\ldots, \sigma_{m-1}.
\]
By the braid relations, each of \(w_{-l}\) and \(w_l\) can be written as a product of a positive word (called the \emph{upper part}) in the generators \(\sigma_1,\sigma_2,\ldots, \sigma_{i-1}\), and a positive word (called the \emph{lower part}) in the generators \(\sigma_{i+l+1},\sigma_{i+l+2},\ldots, \sigma_{m-1}\).

Since \(w_0\) does not contain \(\sigma_{i-1}\) or \(\sigma_i\), it can also be written as a product of a positive word (called the \emph{upper part}) in the generators \(\sigma_1,\sigma_2,\ldots, \sigma_{i-2}\), and a positive word (called the \emph{lower part}) in the generators \(\sigma_{i+1},\sigma_{i+2},\ldots, \sigma_{m-1}\).

By the braid relations, the upper part of \(w_{l}\) commutes with \(\sigma_{i+|l|},\ldots ,\sigma_{i+j-1}\), the lower part of \(w_l\) commutes with \(\sigma_i,\ldots ,\sigma_{i+|l|-1}\), and every upper part commutes with every lower part.

Hence, we can push each lower part into the central word \(w_0\), and push each upper part into the final word \(w_j\), possibly after conjugation. Therefore, a conjugate of \(\beta\) can be written as
\[
\beta'= \sigma_{i+j-1}\cdots \sigma_{i+1}\sigma_i \, W_0 \,\sigma_i \sigma_{i+1}\cdots\sigma_{i+j-1}\, W_1,
\]
where \(W_0\) is a positive word in the generators \(\sigma_{i+1},\sigma_{i+2},\ldots, \sigma_{m-1}\), and \(W_1\) is a positive word in the generators \(\sigma_1,\sigma_2,\ldots,\sigma_{i-1},\sigma_{i+j},\sigma_{i+j+1},\ldots,\sigma_{m-1}\).

The value of $d_{\mathrm{up}}(B,A)$ upon entering line~\ref{line:CrossBrickStep} is equal to the number of occurrences of the letter $\sigma_{i+j}$ in $w_j$, which is also equal to the number of occurrences of $\sigma_{i+j}$ in $W_1$.

If $d_{\mathrm{up}}(B,A)=0$, then the braid word $\beta'$ maps the $(i+j)$-th strand to itself. Consequently, the closure $L$ contains an unknot component passing through the initial point $q$, contradicting the choice of $q$. 

If $d_{\mathrm{up}}(B,A)=1$, we consider two cases. If $j=0$, we apply Lemma~\ref{strand-1} to construct a positive braid on one fewer strand representing the same link. Otherwise, we apply the following lemma to construct a positive braid on one fewer strand representing the same link. In either case, this contradicts the choice of the braid diagram. The proof of Lemma~\ref{strand-2} can be visualized by the isotopy shown in Figure~\ref{fig:strand-2}.

\begin{figure}[!htbp]
\centering

\newcommand{\casefig}[1]{%
\includegraphics[width=\linewidth]{#1}%
}

\begin{tabular}{c c c}
\begin{minipage}{0.44\textwidth}\centering
\casefig{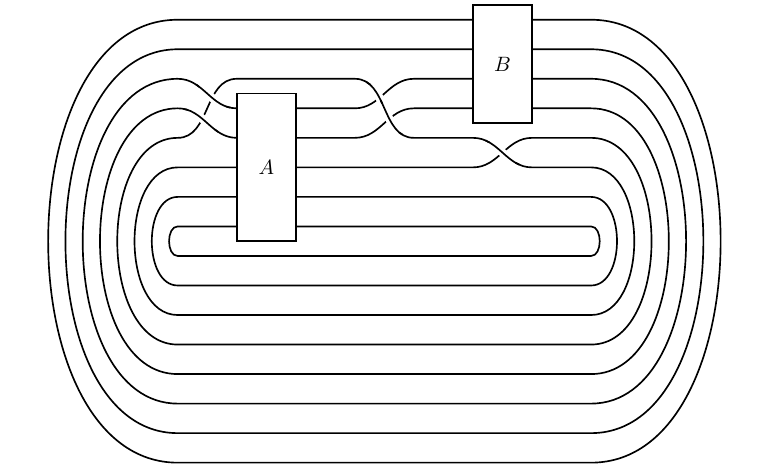}
\end{minipage}
&
$\cong$
&
\begin{minipage}{0.44\textwidth}\centering
\casefig{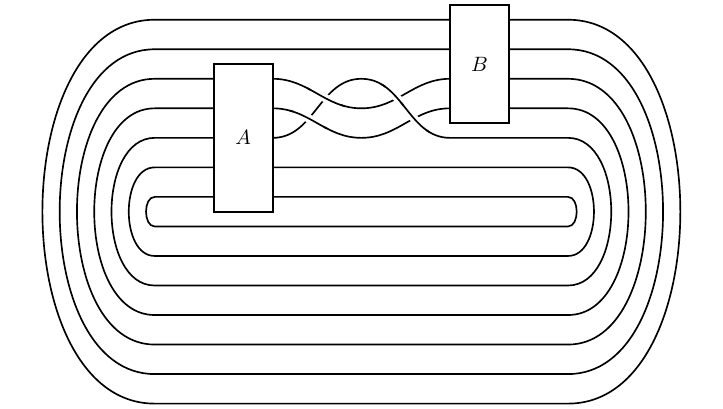}
\end{minipage}
\end{tabular}

\caption{Illustration of the isotopy used in Lemma~\ref{strand-2}.}
\label{fig:strand-2}
\end{figure}

\begin{lemma}\label{strand-2}
Let
\[
\beta= \sigma_{i+j-1}\cdots \sigma_{i+1}\sigma_i \, W_0 \,\sigma_i \sigma_{i+1}\cdots\sigma_{i+j-1}\, W_1
\]
be a positive braid word on $m$ strands. Suppose that $W_0$ contains neither $\sigma_{i-1}$ nor $\sigma_i$, and $W_1$ contains no $\sigma_{i+j-1}$ and exactly one occurrence of $\sigma_{i+j}$. Then there exists a positive braid word $\beta'$ on $m-1$ strands such that the closures $\widehat{\beta}$ and $\widehat{\beta'}$ represent the same link in $S^3$.
\end{lemma}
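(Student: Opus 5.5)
The plan is to reduce to Lemma~\ref{strand-1}. That is, by conjugation and braid relations (no Markov moves), I will turn $\beta$ into an equivalent positive word on $m$ strands in which some generator occurs exactly once, and then destabilize. The natural candidate is the index $i+j$, since $W_1$ contains $\sigma_{i+j}$ exactly once. The difficulty is that $W_0$ may also contain $\sigma_{i+j}$, as well as generators $\sigma_{i+1},\dots,\sigma_{i+j-1}$ that interact with the chains $P:=\sigma_{i+j-1}\cdots\sigma_i$ and $P':=\sigma_i\cdots\sigma_{i+j-1}$. Geometrically, as in Figure~\ref{fig:strand-2}, the two chains form a strand that dips from row $i+j$ up to row $i$, passes around the block $W_0$, and returns. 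The isotopy should slide $W_0$ down one level through this loop, so that afterwards the loop, together with the single crossing $\sigma_{i+j}$ in $W_1$, can be removed as a destabilization.

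First I normalize $W_0$. Since $W_0$ avoids $\sigma_{i-1}$ and $\sigma_i$, I write $W_0=XY$, where $X$ is a positive word in $\sigma_1,\dots,\sigma_{i-2}$ and $Y$ is a positive word in $\sigma_{i+1},\dots,\sigma_{m-1}$. The word $X$ commutes with $P$ and $P'$, so I push it to the right and absorb it into $W_1$. This keeps both hypotheses: $W_1$ still has no $\sigma_{i+j-1}$ and exactly one $\sigma_{i+j}$. Similarly, I write $W_1=U\sigma_{i+j}V$, where $U,V$ avoid $\sigma_{i+j-1}$ and $\sigma_{i+j}$. Each of $U,V$ then splits into an upper part in $\sigma_1,\dots,\sigma_{i+j-2}$ and a lower part in $\sigma_{i+j+1},\dots,\sigma_{m-1}$. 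After cyclic conjugation, I may take $\beta\sim P\,Y\,P'\,U\sigma_{i+j}V$ with $VU$ merged.

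The key algebraic identities are the conjugation rules analogous to those for $\pi_k$ in the proof of Lemma~\ref{strand-1}. For $i+1\le k\le i+j-1$ we have $P\sigma_kP^{-1}=\sigma_{k-1}$, and for $k\ge i+j+1$ the generator $\sigma_k$ commutes with $P$. Hence every letter of $Y$ other than $\sigma_{i+j}$ can be moved leftward through $P$, becoming a positive letter of index one less (or unchanged). For an occurrence of $\sigma_{i+j}$ in $Y$, I plan to use the braid relation $\sigma_{i+j}\sigma_{i+j-1}\sigma_{i+j}=\sigma_{i+j-1}\sigma_{i+j}\sigma_{i+j-1}$, together with $\pi$-type conjugations as in Lemma~\ref{strand-1}. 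The effect should be to trade each such $\sigma_{i+j}$ for an occurrence of $\sigma_{i+j-1}$ adjacent to the chain, so that $\sigma_{i+j}$ is pushed out of the central block. After this, the only remaining $\sigma_{i+j}$ is the one in $W_1$, and the only $\sigma_{i+j-1}$'s near the loop are the two ends of the chains. A final conjugation by $\pi_{i+j}$ should then leave a single occurrence of the last generator, exactly as in Lemma~\ref{strand-1}, and destabilization gives $\beta'$ on $m-1$ strands.

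The main obstacle is precisely the treatment of the occurrences of $\sigma_{i+j}$ inside $Y$, since these do not simply commute with or shift through $P$. My first attempt is induction on $j$. The case $j=0$ is Lemma~\ref{strand-1} applied to the unique $\sigma_i$ after merging. For the inductive step, I conjugate the leading $\sigma_{i+j-1}$ around and rewrite $\beta$ in the same form with $j$ replaced by $j-1$ and $W_1$ replaced by a new positive word that again contains exactly one letter of the relevant index. If that bookkeeping fails, I will fall back on a direct geometric isotopy following Figure~\ref{fig:strand-2}, and read off the new positive braid word from the isotoped diagram.
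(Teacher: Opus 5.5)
Your proposal has a genuine gap, located where you anticipate it. If $W_0$ contains no $\sigma_{i+j}$, then $\sigma_{i+j}$ already occurs exactly once in $\beta$ and Lemma~\ref{strand-1} applies immediately. So the entire content of the lemma is the case where $Y$ contains $\sigma_{i+j}$, and there you only describe what a braid relation ``should'' do. No rewriting is specified that removes the $\sigma_{i+j}$'s from the central block. Trading them for $\sigma_{i+j-1}$'s would also create new occurrences of $\sigma_{i+j-1}$, which you would then have to control. Nothing shows that a positive word on $m$ strands, conjugate to $\beta$ and containing some generator exactly once, exists at all, yet your reduction commits to that target. The fallback induction on $j$ fails as formulated. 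Cycling the leading $\sigma_{i+j-1}$ to the end yields $\sigma_{i+j-2}\cdots\sigma_i\,W_0\,\sigma_i\cdots\sigma_{i+j-2}\,(\sigma_{i+j-1}W_1\sigma_{i+j-1})$. The new tail contains $\sigma_{i+j-1}$ exactly twice and may contain $\sigma_{i+j-2}$, which $W_1$ was never required to avoid. So neither hypothesis of the $(j-1)$ case holds.

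The missing idea, used in the paper, is to allow negative letters and to conjugate by $\pi_{i+j}=\sigma_{m-1}\cdots\sigma_{i+j}$ at the start rather than at the end. We have $\pi_{i+j}\,\sigma_{i+j-1}\cdots\sigma_i=\pi_i$ and $\pi_i\sigma_k\pi_i^{-1}=\sigma_{k-1}$ for every $k>i$. Hence the whole lower part of $W_0$, including every $\sigma_{i+j}$ in it, passes through $\pi_i$ and becomes a positive word in $\sigma_i,\ldots,\sigma_{m-2}$. The letters you treat as the obstacle never have to be eliminated.

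Then use your splitting of $W_1=U\sigma_{i+j}V$ into upper and lower parts (the paper's $V_2V_3\sigma_{i+j}V_4V_5$). Move the lower part of $U$ to the left of $\sigma_i\cdots\sigma_{i+j-1}$, and the upper part of $U$ to the right of $\sigma_{i+j}$. Now $\sigma_{i+j}\pi_{i+j}^{-1}=\pi_{i+j+1}^{-1}$, and the lower part of $V$ conjugates under $\pi_{i+j}$ to a positive word in $\sigma_{i+j},\ldots,\sigma_{m-2}$. The leftover factor $\pi_{i+j}\pi_{i+j+1}^{-1}=\pi_{i+j}^{-1}\sigma_{m-1}\pi_{i+j}$ supplies the unique $\sigma_{m-1}$.

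Destabilize, then conjugate away the remaining $\sigma_{m-2}\cdots\sigma_{i+j}$ against its inverse; this gives a positive word on $m-1$ strands. The word being destabilized is not positive, so this argument never passes through the positive single-occurrence word your plan aims for. You do mention a conjugation by $\pi_{i+j}$, but you place it after an unproven elimination step that applying it first makes unnecessary.
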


\begin{proof}
For each $k$, let $\pi_k$ denote the braid \(\pi_k := \sigma_{m-1}\sigma_{m-2}\cdots \sigma_{k}.\) By the braid relations, we have
\[
\pi_k \sigma_j\pi_k^{-1}=
\begin{cases}
    \sigma_{j-1}, & \text{if } j>k,\\
    \sigma_{j}, & \text{if } j <k-1,
\end{cases}
\]
and
\[
\pi_k \pi_{k+1}^{-1}=\pi_{k}^{-1}\sigma_{m-1}\pi_{k}.
\]

By the braid relations, the element $W_0$ can be written as the product $V_0V_1$, where $V_0$ is a positive word in the generators $\sigma_1 ,\sigma_2,\ldots, \sigma_{i-2}$ and $V_1$ is a positive word in the generators $\sigma_{i+1},\sigma_{i+2},\ldots, \sigma_{m-1}$. Similarly, the element $W_1$ can be written as the product $V_2V_3\sigma_{i+j} V_4  V_5$, where $V_2$ and $V_4$ are positive words in the generators $\sigma_1 ,\sigma_2,\ldots, \sigma_{i+j-2}$, and $V_3$ and $V_5$ are positive words in the generators $\sigma_{i+j+1},\sigma_{i+j+2},\ldots, \sigma_{m-1}$. Then we have
\begin{align*}
    &\pi_{i+j}\beta \pi_{i+j}^{-1} \\
    =&\pi_{i}V_0V_1 \sigma_{i}\sigma_{i+1}\cdots \sigma_{i+j-1} V_2V_3\sigma_{i+j} V_4V_5\pi_{i+j}^{-1}\\
    = &\pi_{i}V_0V_1V_3 \sigma_{i}\sigma_{i+1}\cdots \sigma_{i+j-1} \sigma_{i+j} V_2V_4V_5\pi_{i+j}^{-1}\\
    =&(\pi_i V_0 \pi_i^{-1})(\pi_i V_1V_3 \pi_i^{-1})(\pi_i \sigma_{i}\sigma_{i+1}\cdots \sigma_{i+j}\pi_{i+j}^{-1})(\pi_{i+j}V_2V_4\pi_{i+j}^{-1})(\pi_{i+j}V_5\pi_{i+j}^{-1})\\
    =&V_0(\pi_i V_1V_3 \pi_i^{-1})(\pi_i \sigma_{i}\sigma_{i+1}\cdots \sigma_{i+j-1}\pi_{i+j+1}^{-1})V_2 V_4(\pi_{i+j}V_5\pi_{i+j}^{-1})\\
    =&V_0(\pi_i V_1V_3 \pi_i^{-1})(\pi_{i+j}\pi_{i+j+1}^{-1})( \sigma_{i+j-1} \cdots \sigma_{i}\sigma_{i}\cdots \sigma_{i+j-1})V_2 V_4(\pi_{i+j}V_5\pi_{i+j}^{-1})\\
    =&V_0(\pi_i V_1V_3 \pi_i^{-1})(\pi_{i+j}^{-1}\sigma_{m-1}\pi_{i+j})( \sigma_{i+j-1} \cdots \sigma_{i}\sigma_{i}\cdots \sigma_{i+j-1})V_2 V_4(\pi_{i+j}V_5\pi_{i+j}^{-1}).
\end{align*}

The element $\pi_i V_1V_3 \pi_i^{-1}$ can be written as a positive word in the generators $\sigma_{i},\sigma_{i+1},\ldots, \sigma_{m-2}$, and the element $\pi_{i+j}V_5\pi_{i+j}^{-1}$ can be written as a positive word in the generators $\sigma_{i+j},\sigma_{i+j+1},\ldots, \sigma_{m-2}$.
After canceling a $\sigma_{m-1}^{-1}\sigma_{m-1}$ pair, the word $\pi_{i+j}^{-1}\sigma_{m-1}\pi_{i+j}$ contains exactly one occurrence of $\sigma_{m-1}$ and no $\sigma_{m-1}^{-1}$.

Hence, $\pi_{i+j} \beta \pi_{i+j}^{-1}$ admits a destabilization along the last strand, which removes this $\sigma_{m-1}$ and produces a braid word $\beta'$ in the generators $\sigma_1,\sigma_2,\ldots,\sigma_{m-2}$.
The closures $\widehat{\beta}$ and $\widehat{\beta'}$ represent the same link in $S^3$.
\end{proof}

Therefore, the output curves of Algorithm~\ref{alg:brick-curve} are simple.

\subsection{Verification of Properties \texorpdfstring{\ref{G1}--\ref{G8}}{(G1)--(G8)}}\label{sec:verify-G}

Let \(\gamma_1,\gamma_2,\ldots,\gamma_k\) be the simple closed curves produced by Algorithm~\ref{alg:brick-curve}. Assume that \(\gamma_s\) is the first curve constructed (so the initial point \(q\) lies on \(\gamma_s\)). Moreover, assume that \(\gamma_{s+1},\ldots,\gamma_k\) are the curves constructed during the remainder of the downward expansion phase, and that \(\gamma_{s-1},\ldots,\gamma_1\) are the curves constructed during the upward expansion phase. We will prove that \(\gamma_1,\gamma_2,\ldots,\gamma_k\) satisfy Properties~\ref{G1}--\ref{G8}.

\begin{theorem}\label{thm:G1-G8}
The simple closed curves \(\gamma_1,\gamma_2,\ldots,\gamma_k\) satisfy Properties~\ref{G1}--\ref{G8}.
\end{theorem}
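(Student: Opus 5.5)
We verify the eight properties one at a time, tracking the invariants the algorithm maintains. Throughout, $(A,B)$ lie on a common row and the partial path $\gamma$ runs from $B$ to $A$. Before each step, $\gamma$ meets each vertical line at most once. The vertical lines already covered by $\gamma$ are exactly those over the complementary segment of $[B,A]$ (respectively $[A,B]$ in the upward phase).

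\textbf{Properties \ref{G1} and \ref{G2}.} These are checked procedure by procedure. The procedures \textsc{StepBelow}, \textsc{StepAbove} and \textsc{CrossBrickStep} add two horizontal segments that grow the covered interval outward. They also add vertical segments, which in the train track $\tau$ are the short mix-type horizontal pieces, so $\tau$ remains transverse to vertical lines. \textsc{CloseAndStore} adds the remaining segment $[B,A]$, so each vertical line is met exactly once. The simplicity results of Subsection~\ref{sec:simple} guarantee that no line is covered twice. Every closed curve contains exactly one arc through the interior of a brick: the one from \textsc{CrossBrickStep}, \textsc{RestartBelow} or \textsc{RestartAbove}. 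All other pieces lie on rows and vertical segments, i.e.\ on $\tau$. This gives \ref{G2}.

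\textbf{Properties \ref{G3} and \ref{G4}.} For \ref{G3}, we use that each new curve starts inside the brick directly below the current point $A$, which lies on the last row reached by the previous curve. We compare the curves column by column. The previous curve, on each vertical line, sits at or above the row reached by its current $A,B$ at that stage, because curves only descend as they expand outward. The new curve starts one row lower and also descends as it expands. An induction on the number of expansion steps, comparing the two descent sequences via $L_{\mathrm{up}},R_{\mathrm{up}}$, shows that the new curve stays weakly below the previous one. The upward phase is symmetric, and $\gamma_s$ sits between the two families because both phases start at $q$. For \ref{G4}, each brick used is either the crossing brick of $\gamma_s$ or the brick directly below (or above) the current $A$ on a strictly new row. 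So the bricks lie on distinct rows, or, in the case of $\gamma_s$ versus the first restart brick, on different sides.

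\textbf{Properties \ref{G5}--\ref{G8}.} These are local. For \ref{G5} and \ref{G6}, the termination conditions matter. $\gamma_k$ ends when $A$ is on the last row, and the last row has no bricks below it. So a point of $\gamma_k$ on the last row lies on a row arc of $\tau$, and we choose it away from vertices. Symmetrically, $\gamma_1$ reaches the first row. If $q$ is on the first row, $\gamma_1=\gamma_s$ and we use $q$ itself. Points on row arcs between crossings of the first and last rows must be checked to be under-type; we expect this to follow from the minimality of the chosen points $p$ together with the convention for placing over- and under-arcs. For \ref{G7}, consecutive curves meet the restart brick $R$. The upper curve passes along the top edge of $R$, and the lower curve through the interior of $R$. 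A short vertical segment inside $R$, from the top edge to the interior arc, has interior disjoint from $\tau$. Its top endpoint lies on $\tau$ and the other endpoint is off $\tau$. The delicate point is to arrange the top endpoint on an under-type arc rather than a mix-type one. Here we use that $d_{\mathrm{up}}\le 1$ at closure, which leaves room on the top edge of $R$ away from the crossings.

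\textbf{Expected main obstacle.} This is \ref{G8}: every under-arc of $D$ has a point avoided by all the $\gamma_i$. The plan is to count coverage. Each curve covers every column exactly once, so a given column is covered by $k$ curve points spread over $k$ rows. We then show that on each row some stretch of under-type arc is missed, using the Type X crossings (red dots in Figure~\ref{fig:brick}) where consecutive curves diverge. If this counting fails, we fall back on the observation that each under-arc crosses beneath some over-arc. The curves follow rows only between the chosen vertices, so a point just past an under-crossing should be bypassed by the vertical jump of whichever curve reaches that row.
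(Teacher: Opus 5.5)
Your treatment of \ref{G1}--\ref{G4} agrees with the paper. For \ref{G3} no induction is needed: $\gamma_i$ only descends from its start and closes on row $r_{i+1}$, so it lies in the closed band between rows $r_i$ and $r_{i+1}$, while $\gamma_{i+1}$ starts inside a brick below row $r_{i+1}$.

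The gaps begin at \ref{G5}--\ref{G7}: you never determine which points are under-type, and your expectation is false as stated. For instance, $\gamma_k$ meets the last row in the closing segment $[B,A]$, and the points just right of the down-vertex $B$ are over-type. The fact you need is local. All crossings are positive, so the strand climbing from row $r+1$ to row $r$ is the under-strand. Hence a point immediately right of an up-vertex, or immediately left of a down-vertex, lies on an under-type arc. With this fact:
\begin{itemize}
\item \ref{G6} uses the point just left of $A$ at line~\ref{line:close-2}.
\item \ref{G5} uses the point just right of $A$ at line~\ref{line:close-4}. If $q$ is on the first row, use $q$ itself, chosen in the left, under-type half of $[L_{\mathrm{up}}(q),R_{\mathrm{up}}(q)]$.
\item \ref{G7} uses, at each downward closure, the point $p_1$ just left of $A$. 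It lies on $\gamma_i$ and on the top edge of the restart brick, so the point directly below it on the restart arc is $p_2$. The upward phase is symmetric, with $q$ serving for $i=s-1$.
\end{itemize}
Your worry about mix-type arcs is misplaced: they correspond to vertical segments of the brick diagram, never to interior points of rows. The bound $d_{\mathrm{up}}\le 1$ plays no role here. Also, $\gamma_i$ covers only part of the top edge of the restart brick, so this specific point must be chosen.

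The main gap is \ref{G8}, for which you give only a plan. A row-by-row count cannot work: \ref{G8} concerns each under-arc, and an under-arc spans several rows while each row carries pieces of many under-arcs. The fallback about vertical jumps is only a heuristic.

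The missing idea is to look at the two end segments of an under-arc. Its top piece lies in a row segment $[U_1,U_0]$ joining two consecutive up-vertices, and its bottom piece lies in a segment $[D_{t+1},D_t]$ joining two consecutive down-vertices.
\begin{itemize}
\item On rows strictly below $q$, the points $A,B$ are always down-vertices. The row segments added are $[L_{\mathrm{up}}(A),A]$, $[B,R_{\mathrm{up}}(B)]$, or a closing $[B,A]$ with $d_{\mathrm{up}}(B,A)\le 1$. None of these contains an up-to-up segment.
\item Symmetrically, no down-to-down segment is used strictly above $q$.
\item Hence the only possible failure is an under-arc with a piece on the row of $q$. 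That row is covered exactly along $[L_{\mathrm{up}}(q),R_{\mathrm{up}}(q)]$, which has no interior vertices by minimality of the row. So the under-arc in question has $[U_1,U_0]=[L_{\mathrm{up}}(q),R_{\mathrm{up}}(q)]$.
\item For this arc, $d_{\mathrm{up}}(B,A)=2$ at every Phase~(i) \textsc{StepBelow}. So the uncovered part of each new row has no down-vertices besides $A$ and $B$. Thus $L_{\mathrm{up}}(A)$ is the vertex adjacent to $A$, and the updates of $A$ run through $D_1,D_2,\ldots$ along this under-arc.
\item \textsc{CrossBrickStep} then crosses the brick diagonally instead of descending $[A',\opp(A')]$. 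The point just left of $\opp(A')$ lies on this under-arc and on no output curve.
\end{itemize}
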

\begin{proof}
We verify the properties one by one.

\medskip
\noindent\textbf{Property~\ref{G1}.}
During the construction of $\gamma_s,\gamma_{s+1},\ldots,\gamma_k$, the path is always extended to the left of the left endpoint $A$ and to the right of the right endpoint $B$. During the construction of $\gamma_{s-1},\gamma_{s-2},\ldots,\gamma_1$, the path is always extended to the right of the right endpoint $A$ and to the left of the left endpoint $B$. Therefore, the output curves are transverse to every vertical line.

\medskip
\noindent\textbf{Property~\ref{G2}.}
The curve $\gamma_s$ contains an interior arc connecting two corners of a brick constructed in the \textsc{CrossBrickStep} procedure at line~\ref{line:CrossBrickStep}. Each curve in $\gamma_{s+1},\ldots,\gamma_k$ contains an interior arc connecting two corners of a brick constructed in the \textsc{RestartBelow} procedure at line~\ref{line:restart-below}. Each curve in $\gamma_{s-1},\ldots,\gamma_1$ contains an interior arc connecting two corners of a brick constructed in the \textsc{RestartAbove} procedure at line~\ref{line:restart-above1} or line~\ref{line:restart-above2}. The remaining parts follow the edges of the bricks.

\medskip
\noindent\textbf{Property~\ref{G3}.}
Suppose the initial point $q$ lies on row $r_s$. Each time the \textsc{CloseAndStore} procedure at line~\ref{line:close-1} or line~\ref{line:close-2} is performed, the endpoints $A$ and $B$ lie on the rows $r_{s+1},\ldots,r_{k+1}$, respectively. Afterwards, each time the \textsc{CloseAndStore} procedure at line~\ref{line:close-3} or line~\ref{line:close-4} is performed, the endpoints $A$ and $B$ lie on the rows $r_{s-1},\ldots,r_1$, respectively. Consequently, each $\gamma_i$ lies between row $r_i$ and row $r_{i+1}$. Since $r_1<r_2<\cdots< r_{k+1}$, Property~\ref{G3} holds.

\medskip
\noindent\textbf{Property~\ref{G4}.}
Since each $\gamma_i$ lies between row $r_i$ and row $r_{i+1}$, the associated bricks are also distinct.

\medskip
\noindent\textbf{Property~\ref{G5}.}
If the initial point $q$ is not on the first row, a point immediately to the right of $A$ when performing the \textsc{CloseAndStore} procedure at line~\ref{line:close-4} serves as the point $p$ in Property~\ref{G5}. Otherwise, the point immediately to the right of $L_{\mathrm{down}}(q)$ serves as the point $p$ in Property~\ref{G5}.

\medskip
\noindent\textbf{Property~\ref{G6}.}
A point immediately to the left of $A$ when performing the \textsc{CloseAndStore} procedure at line~\ref{line:close-2} serves as the point $p$ in Property~\ref{G6}.

\medskip
\noindent\textbf{Property~\ref{G7}.}
For each $s\le i\le k-1$, the point immediately to the left of $A$ when performing the \textsc{CloseAndStore} procedure at line~\ref{line:close-1} on row $r_{i+1}$ serves as the point $p_1\in \gamma_i$ in Property~\ref{G7}. 
The \textsc{RestartBelow} procedure at line~\ref{line:restart-below} initiates the next curve in the brick directly below $A$. 
Thus, directly below $p_1$ there is a point serving as $p_2\in \gamma_{i+1}$ in Property~\ref{G7}.

For $i=s-1$, the initial point $q$ serves as the point $p_2\in \gamma_{i+1}$ in Property~\ref{G7}. By the choice of $q$, we may assume that $q$ lies on an under-type arc of $\tau$. The \textsc{RestartAbove} procedure at line~\ref{line:restart-above1} initiates the next curve in the brick directly above $p_2$. Thus, directly above $p_2$ there is a point serving as $p_1\in \gamma_{i}$ in Property~\ref{G7}.

For each $1\le i\le s-2$, the point immediately to the right of $A$ when performing the \textsc{CloseAndStore} procedure at line~\ref{line:close-3} on row $r_{i+1}$ serves as the point $p_2\in \gamma_{i+1}$ in Property~\ref{G7}. Thus, directly above $p_2$ there is a point serving as $p_1\in \gamma_{i}$ in Property~\ref{G7}.

\medskip
\noindent\textbf{Property~\ref{G8}.}
An under-arc of $D$ corresponds to a collection of under-type arcs of $\tau$. These under-type arcs can be represented in the brick diagram as follows.

Let
\[
U_0,U_1,D_1,U_2,D_2,\ldots,U_t,D_t,D_{t+1}
\]
be vertices of the brick diagram such that
\begin{itemize}
    \item each $U_i$ is an up-vertex;
    \item each $D_i$ is a down-vertex;
    \item each $[U_i,D_i]$ represents a vertical segment;
    \item $[U_1,U_0]$, $[D_{t+1},D_t]$, and each $[U_{i+1},D_i]$ represent horizontal segments with no interior vertices. The left half of $[U_1,U_0]$, the right half of $[D_{t+1},D_t]$, and each segment $[U_{i+1},D_i]$ correspond to the $t+1$ under-type arcs of $\tau$ arising from a single under-arc.

\end{itemize}
If there exists an interior point in $[U_1,U_0]$, $[D_{t+1},D_t]$, or some $[U_{i+1},D_i]$ that is not on any output curve, then Property~\ref{G8} holds. We consider three cases.

First, suppose that the row containing $[U_1,U_0]$ lies below the initial point $q$. In this case, none of the local procedures extends a path along $[U_1,U_0]$. Thus, the interior of $[U_1,U_0]$ does not intersect any output curve.

Second, suppose that the row containing $[D_{t+1},D_t]$ lies above the initial point $q$. Similarly, none of the local procedures extends a path along $[D_{t+1},D_t]$. Thus, the interior of $[D_{t+1},D_t]$ does not intersect any output curve.

Finally, suppose that the row containing $[U_1,U_0]$ is not below the initial point $q$, and the row containing $[D_{t+1},D_t]$ is not above $q$. Then there exists a segment $I$ among $[U_1,U_0]$, $[D_{t+1},D_t]$, or some $[U_{i+1},D_i]$ that lies on the same row as $q$.

By Algorithm~\ref{alg:brick-curve}, the initial row intersects the output curves in the interval $[L_{\mathrm{up}}(q),R_{\mathrm{up}}(q)]$. By the choice of $q$, there are no interior vertices in $[L_{\mathrm{up}}(q),R_{\mathrm{up}}(q)]$. If $I$ intersects some output curve, then it must coincide with $[L_{\mathrm{up}}(q),R_{\mathrm{up}}(q)]$. Since both endpoints are up-vertices, we must have $I=[U_1,U_0]$.

Every time we update $A$ before \textsc{CrossBrickStep} (line~\ref{line:CrossBrickStep}), the updated value is some $D_i$. During \textsc{CrossBrickStep}, the point immediately to the left of $\opp(A')$ is an interior point of $[D_{t+1},D_t]$ or of some $[U_{i+1},D_i]$, and it does not lie on any output curve.

This completes the proof.
\end{proof}
\section{Slopes realized by the boundary train track}\label{sec:5}

Following \cite{li2003boundary}, we study the slopes realized by the boundary train track of the branched surface $B'$. Here, a rational slope $s\in \mathbb{Q}\cup \{\infty\}$ is said to be \emph{realized} by a train track $\tau$ if $\tau$ fully carries a union of simple closed curves with slope $s$.

\subsection{From carried slopes to fully carried slopes}

The purpose of this subsection is to pass from slopes realized by a sub-train track to slopes realized by the entire train track. In particular, if our boundary train track carries curves of two distinct rational slopes $s_1$ and $s_2$, then it realizes every rational slope in an open interval between $s_1$ and $s_2$.

\begin{theorem}\label{subtrack-realize}
Let $\tau$ be an oriented train track on a torus $T$ such that $T\setminus \tau$ is a union of bigons. Let $\tau'\subset\tau$ be a sub-train track realizing two distinct rational slopes. Then every rational slope realized by $\tau'$ is also realized by $\tau$.
\end{theorem}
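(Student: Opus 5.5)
We work with nonnegative weights on the branches of $\tau$ satisfying the switch conditions, i.e.\ transverse measures. Since $\tau$ is oriented, every such weight system defines a homology class in $H_1(T;\mathbb{R})$. Since the complementary regions are bigons, a positive integral weight system is realized by a union of disjoint simple closed curves fully carried by $\tau$. On a torus, disjoint essential simple closed curves are parallel, so they all have a common slope. Any null-homotopic component would bound a disk, which is impossible for a curve carried by a train track whose complement consists of bigons. So a rational slope $s$ is realized by $\tau$ exactly when there is a strictly positive integral weight $w$ on $\tau$ whose class in $H_1(T)$ is a nonzero multiple of the primitive class with slope $s$.

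The plan has three steps.

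\textbf{Step 1: a positive weight on all of $\tau$.} Because the complementary regions of $\tau$ are bigons, the smooth structure of $\tau$ is that of a "transversely recurrent / recurrent" oriented track. More concretely, every branch lies on a closed train route, since an oriented train path can always be continued at each switch and each branch is entered from somewhere. I would argue that the cone of weights is nonempty in its interior. For each branch $b$, pick a closed oriented train route through $b$; summing these gives an integral weight $w_0>0$ on every branch. Its class $[w_0]$ is a nonzero element of $H_1(T)$. Indeed, $w_0$ is realized by curves with a common slope, none of which is null-homotopic, and all of which are oriented coherently by the orientation of $\tau$.

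\textbf{Step 2: combining $w_0$ with weights on $\tau'$.} Fix a rational slope $s$ realized by $\tau'$, carried by a weight $u$ on $\tau'$ with primitive class $a$. Also fix a second weight $v$ on $\tau'$ of a different slope, with class $b$ not parallel to $a$. The oriented classes carried by an oriented track lie in a convex cone, and all of them pair with the same sign against a suitable cohomology class, namely the one dual to the orientation. Hence $a$ and $b$ span a 2-dimensional convex cone $C$ with $a$ on its boundary or interior. Extend $u$ and $v$ by zero to weights on $\tau$. Then $N u + \epsilon w_0$ is positive on all branches and has class $Na+\epsilon[w_0]$.

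To land exactly on slope $s$ with full support, I will use the freedom in $v$ and $u$. Write $[w_0]=\alpha a+\beta b$ with $\alpha,\beta$ rational, since all classes are integral. If $\beta\le 0$, the combination $M u + w_0 + |\beta| v$ has class $(M+\alpha)a$. This weight is positive on every branch, and its class is a positive multiple of $a$ for $M$ large. If $\beta>0$, I instead need a weight on $\tau'$ with slope on the other side of $a$ from $b$. This case depends on $a$ being an interior ray of the cone carried by $\tau'$; if $a$ is an extreme ray, I would try replacing $v$ by another weight, or else show that $[w_0]$ can be chosen on the correct side.

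\textbf{Step 3 (main obstacle): fixing the sign of $\beta$.} This step is where the hypothesis of two distinct slopes is essential. I would handle it by modifying $w_0$ rather than $v$. The weight $w_0$ was built from arbitrary closed routes, and I can add large multiples of $u$ and $v$ to it. Any weight $w_0'=w_0+Pu+Qv$ is still positive. Choosing $Q$ appropriately, I can also scale $u$ so that the class of $w_0 + Pu$ is dominated by $a$.

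Concretely, I would take the final weight to be $w=Mu+w_0$ and replace the goal "slope exactly $s$" by a clearing step. If the class $Ma+[w_0]$ is not parallel to $a$, add $Rv$ or $R'u$ so that the $b$-coefficient vanishes. This requires the $b$-component of $[w_0]$ to be cancellable by elements of the cone of $\tau'$, i.e.\ that $-b$ also be carried. I expect the real work to lie here: showing that $\tau'$ carries slopes on both sides of $s$, or that $[w_0]$ can be chosen as a positive multiple of $a$ plus something in the cone of $\tau'$. My first attempt would be to construct $w_0$ by closing up routes through each branch of $\tau\setminus\tau'$ using paths in $\tau'$. I would then compare homology using the bigon structure, which forces all carried classes to lie in a fixed open half-plane.

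Finally, take integer multiples to make all weights integral. The resulting positive integral weight is fully carried, and its class is a positive multiple of $a$, so $\tau$ realizes $s$.
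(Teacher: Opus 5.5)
Your weight-space setup is sound, but there are two genuine gaps. The first is the one you flag as the main obstacle, and it comes from a missing observation. In Step 2 you treat $v$ as something you may only add, so you get stuck when $\beta>0$. But ``$\tau'$ realizes $s$'' means $\tau'$ \emph{fully} carries curves of slope $s$. So $u$ can be taken strictly positive on every branch of $\tau'$, while $v$ vanishes off $\tau'$. Hence for $M$ large, $Mu-\beta v$ is positive on $\tau'$ whatever the sign of $\beta$. The weight
\[
w = Mu + w_0 - \beta v
\]
is then positive on every branch of $\tau$ (it equals $w_0$ off $\tau'$) and satisfies the switch conditions by linearity. Its class is $(M+\alpha)a$, a positive multiple of $a$. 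This is exactly the paper's measure $\mu_1+\varepsilon(\mu+t\mu_2)$, with $\varepsilon=1/M$ and $t=-\beta$ allowed to be negative. No information about which side of $a$ the slopes of $\tau'$ lie on is needed. Moreover, the half-plane property you hope to extract from the bigon structure is false in general. Take two oppositely oriented parallel curves $c_1,c_2$, joined by a U-turn branch in each of the two complementary annuli (a torus split into two Reeb annuli). All complementary regions are bigons, yet the track carries both $[c_1]$ and $-[c_1]$.

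Second, Step 1 is not justified. Being able to continue an oriented train path forward and backward only places each branch on a bi-infinite route from a backward cycle to a forward cycle, not on a closed route. Bigon complements plus orientation do not imply recurrence. For example, take two coherently oriented parallel essential curves $c_1,c_2$ and, in each of the two annuli between them, one branch running from $c_1$ to $c_2$. Both complementary regions are bigons, but nothing leads from $c_2$ back to $c_1$, and the switch equations force the connecting branches to have weight $0$.

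The two-slope hypothesis is needed exactly here, not in Step 3. The paper argues as follows. Closed routes $\rho_1,\rho_2$ in $\tau'$ of distinct slopes must share a point $p_0$. From any $p\in\tau$, following the orientation forward yields a closed route. That route is essential by the index count for bigons, so its slope differs from at least one of $s_1,s_2$, and it therefore meets $\rho_1$ or $\rho_2$. Since $\tau$ is oriented, you can transfer onto that route and reach $p_0$. Arguing backward gives a route from $p_0$ to $p$. So every branch lies on a closed route, and a positive $w_0$ exists.
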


\begin{proof}
Let $s_1$ and $s_2$ be distinct rational slopes realized by $\tau'$. We prove that $s_1$ is also realized by $\tau$. 
Let $\gamma_1$ (resp. $\gamma_2$) be a closed curve of slope $s_1$ (resp. $s_2$) carried by $\tau'$. Let $\rho_1$ and $\rho_2$ be the corresponding train routes. Since $s_1\neq s_2$, they intersect; fix an intersection point $p_0\in \rho_1\cap\rho_2$.

Take any point $p\in \tau$. Starting at $p$, follow the orientation of $\tau$. Since $\tau$ is compact, the path must eventually meet itself. The first self-intersection produces a closed train route $\gamma$.

Each complementary region of $\tau$ is a bigon, hence has cusped Euler index $0$. Therefore, any union of complementary regions also has index $0$. Since a disk has cusped Euler index $1$, $\tau$ cannot carry a null-homotopic closed curve. Hence $\gamma$ represents a slope in $T$.

Because $s_1\neq s_2$, the curve $\gamma$ intersects at least one of $\rho_1$ or $\rho_2$. Following that curve to the point $p_0$ gives a train route from $p$ to $p_0$. Repeating the argument in the reverse direction produces a train route from $p_0$ back to $p$. Thus every point of $\tau$ lies on a closed train route. Hence $\tau$ is recurrent. In particular, $\tau$ admits a positive rational transverse measure $\mu$.

Let $\mu_1$ and $\mu_2$ be positive rational transverse measures on $\tau'$ determining slopes $s_1$ and $s_2$, respectively. Extend $\mu_1$ and $\mu_2$ to transverse measures on $\tau$ by assigning weight $0$ to branches in $\tau\setminus\tau'$.

Since $s_1\neq s_2$, the homology classes $[\mu_1]$ and $[\mu_2]$ are linearly independent in $H_1(T;\mathbb Q)\cong\mathbb Q^2$. Hence there exists a rational number $t$ such that \([\mu+t\mu_2]\) is a multiple of $[\mu_1]$ in $H_1(T;\mathbb Q)$.

Therefore, for a sufficiently small positive rational number $\varepsilon$, \(\mu_1+\varepsilon(\mu+t\mu_2)\) is a positive rational transverse measure on $\tau$ determining slope $s_1$. Since $\tau$ carries no null-homotopic closed curves, it fully carries a union of simple closed curves with slope $s_1$.
\end{proof}

\subsection{Local behavior near a crossing}

In this subsection, we analyze the local structure of the boundary train track of $B'$ near each crossing of the link diagram. 

Our first step is to select a point $x'$ in $B'\cap \partial \nu(L)$ corresponding to each arc cut by the crossings. Such an arc corresponds to a horizontal segment connecting two adjacent vertices in the brick diagram.

Recall that a point $x'\in B'$ is determined by a point $x\in B$ together with its relative height with respect to $F$. For each arc cut by the crossings, choose a point $p$ on the train track $\tau$ corresponding to this arc, and let
\[
(p,t)\in (\tau\times[0,3])\setminus \operatorname{int}(\nu(L))
\]
be the point with minimal $t$ subject to $t\ge h(p)$. By the construction of $B$, $(p,t)$ determines a point $x\in B\cap \partial \nu(L)$.

To specify the relative height of $x'\in B'\cap \partial \nu(L)$ with respect to $F$, it suffices to specify its relative height with respect to the output curve $\gamma_i$ passing through $p$ (if one exists). We use the following rule:

If $p$ lies on or below the initial row of Algorithm~\ref{alg:brick-curve}, then $x'\in B'\cap \partial \nu(L)$ is above the output curve $\gamma_i$ passing through $p$; 
otherwise, $x'\in B'\cap \partial \nu(L)$ is below the output curve $\gamma_i$ passing through $p$.

We next study how the points $x'\in B'\cap \partial \nu(L)$ are connected in the boundary train track near a crossing. If the corresponding arcs are connected by an under-strand, then they are connected directly together, as in the blackboard framing, without any switches. If the corresponding arcs are connected by an over-strand, the local configuration of the boundary train track, together with the position of the point $x'\in B'$, is one of the twelve cases shown in Figure~\ref{fig:overstrand-cases}. 

\begin{figure}[!htbp]
\centering

\newcommand{\casefig}[2]{
\begin{subfigure}{0.47\textwidth}
\centering
\includegraphics[width=\linewidth]{#1}
\caption{$(#2)$}
\end{subfigure}
}

\casefig{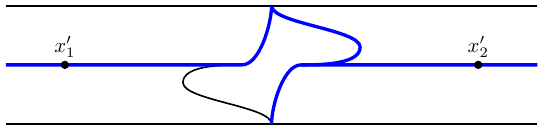}{X_1}\hfill
\casefig{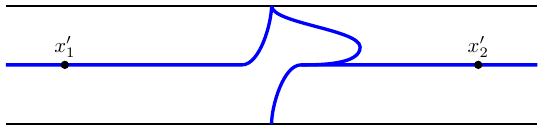}{X_2}

\vspace{3mm}
\casefig{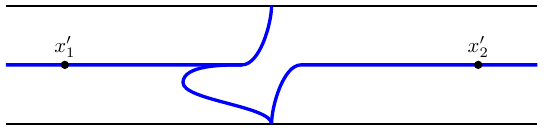}{X_3}\hfill
\casefig{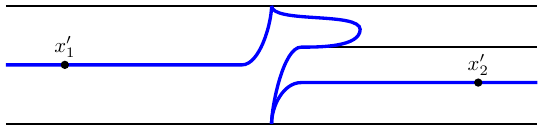}{X_4}

\vspace{3mm}
\casefig{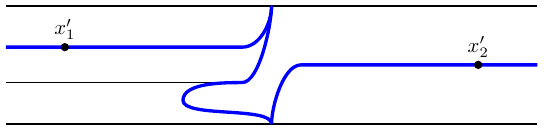}{X_5}\hfill
\casefig{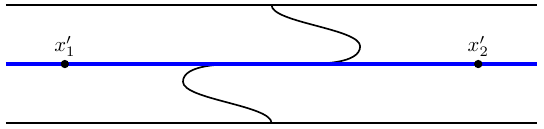}{Y_1}

\vspace{3mm}
\casefig{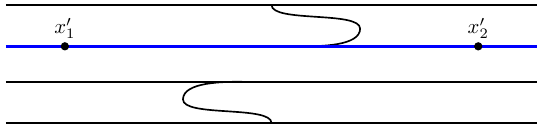}{Y_2}\hfill
\casefig{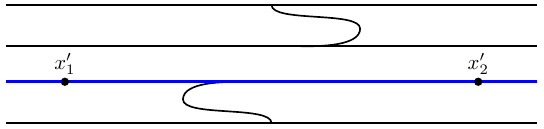}{Y_3}

\vspace{3mm}
\casefig{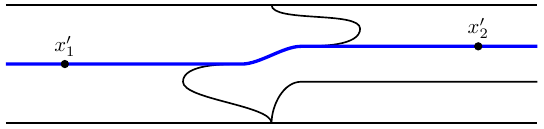}{Y_4}\hfill
\casefig{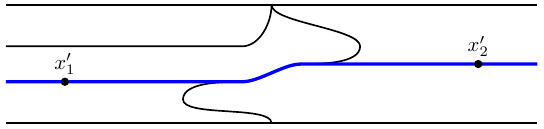}{Y_5}

\vspace{3mm}
\casefig{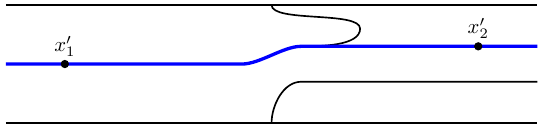}{Y_6}\hfill
\casefig{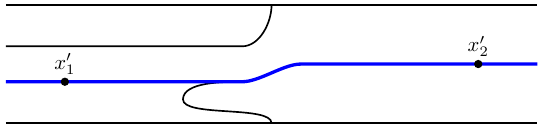}{Y_7}

\caption{Local behavior of boundary train track.}
\label{fig:overstrand-cases}
\end{figure}

If the output curves do not meet the vertical segment corresponding to a crossing, then the local configuration of the boundary train track is as in case~$(Y_1)$.

Each procedure may produce crossings for which the nearby boundary train track
is in one of the following cases.

\begin{itemize}
\item \textsc{CrossBrickStep}: $(X_2),(X_3),(Y_2),(Y_4)$;
\item \textsc{StepBelow}: $(X_1),(Y_2),(Y_4)$;
\item \textsc{StepAbove}: $(X_1),(Y_3),(Y_5)$;
\item \textsc{CloseAndStore}: $(Y_2),(Y_3)$;
\item \textsc{RestartBelow}: $(X_3),(X_5),(Y_6)$;
\item \textsc{RestartAbove}: $(X_2),(X_4),(Y_7)$.
\end{itemize}

We call a crossing Type~X if, at some step of Algorithm~\ref{alg:brick-curve}, the construction of $\gamma$ from the $A$-side (that is, the left side of $\gamma$ when the construction is on or below the row containing the initial point $q$, and the right side of $\gamma$ when the construction is above that row) either
\begin{enumerate}
    \item runs along the entire corresponding vertical segment, or
    \item enters or exits a brick through an endpoint of that vertical segment.
\end{enumerate}
Otherwise, the crossing is called Type~Y. Each application of any of the procedures \textsc{StepBelow}, \textsc{StepAbove}, \textsc{RestartBelow}, or \textsc{RestartAbove} produces one Type~X crossing. An application of \textsc{CrossBrickStep} produces two Type~X crossings. The total number of Type~X crossings equals the number of strands in the braid diagram. In Figure~\ref{fig:brick}, red dots mark Type~X crossings.

Near a Type~X crossing, the local configuration of the boundary train track is one of the cases $(X_1)$--$(X_5)$. In each case we can find train routes, shown as blue curves in Figure~\ref{fig:overstrand-cases}, with the following
properties.

\begin{lemma}\label{X-crossing-1}
For each Type~X crossing, the boundary train track contains a train route connecting the selected points $x_1'$ and $x_2'$ on the over-strand that introduces one additional twist relative to the blackboard framing.
\end{lemma}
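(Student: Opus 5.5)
The plan is a local case analysis. By the discussion preceding the lemma, the boundary train track near a Type~X crossing is in one of the configurations $(X_1)$--$(X_5)$ of Figure~\ref{fig:overstrand-cases}. So it suffices to exhibit, in each of these five configurations, a train route from $x_1'$ to $x_2'$ and to compute its framing. I would first fix a uniform local model. Near the crossing, $\partial\nu(L)$ around the over-strand is an annulus $A\cong[0,1]\times S^1$, with longitudinal coordinate along the over-strand and meridional coordinate around it. The blackboard framing is the curve $[0,1]\times\{\theta_0\}$, where $\theta_0$ is the direction pointing toward the diagram (downward in the $t$-coordinate, i.e.\ the positive co-orientation side of $B$ by \Cref{cooriented}). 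The selected points $x_1',x_2'$ lie on $\{0\}\times S^1$ and $\{1\}\times S^1$ at the minimal-height position, by the choice of $(p,t)$ with minimal $t\ge h(p)$.

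Next I would describe $B'\cap A$ explicitly. Before splitting, $B\cap A$ consists of the two arcs where the sheet $\tau\times[h,3]$ over the over-arc meets the tube. It also contains the arcs coming from the mix-type segment: there $h=0$, so the sheet over the horizontal segment passes beneath the over-strand and wraps around the tube, meeting $A$ in a meridional arc. The result is a train track on $A$ made of longitudinal branches, one meridional branch around the tube, and switches whose directions are given by the branch direction of the sheets. Splitting along $F$ near this crossing inserts the extra switches shown in the figure. By the definition of Type~X, some $\gamma$ runs along or through an endpoint of the crossing's vertical segment on its $A$-side, and the relative-height rule for $x'$ then fixes which side of $\partial F$ the points $x_i'$ lie on.

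In each case $(X_1)$--$(X_5)$, the route I want starts at $x_1'$, turns onto the meridional branch at a switch compatible with the orientation of the boundary train track, goes once around the tube, and then turns back onto the longitudinal branch ending at $x_2'$. Its algebraic intersection with the blackboard-framing curve is then $\pm1$. The sign is fixed by positivity of the crossing together with the orientation induced from the co-orientation of $B'$ (\Cref{cooriented-2}). This gives exactly one additional twist.

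The main obstacle is verifying that the route is a legal train route, meaning it passes each switch smoothly, in every one of the five configurations. The issue is where $\partial F$ cuts the meridional branch relative to the chosen heights of $x_1'$ and $x_2'$: an unlucky configuration would force a cusp. To handle this I would tabulate, for each procedure producing a Type~X crossing (\textsc{StepBelow}/\textsc{StepAbove} for $(X_1)$, \textsc{CrossBrickStep}/\textsc{RestartAbove} for $(X_2)$, \textsc{CrossBrickStep}/\textsc{RestartBelow} for $(X_3)$, \textsc{RestartAbove} for $(X_4)$, \textsc{RestartBelow} for $(X_5)$), which sheet of $B'$ each $x_i'$ lies on. I would then check that the $A$-side condition guarantees that the meridional branch is reached from $x_1'$'s sheet and exits onto $x_2'$'s sheet with consistent switch directions. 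Reading off the blue curves in Figure~\ref{fig:overstrand-cases} should confirm this case by case.
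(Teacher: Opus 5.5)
Your plan is the paper's argument. The lemma is established there only by inspecting the five local pictures $(X_1)$--$(X_5)$ in Figure~\ref{fig:overstrand-cases}, whose blue curves are exactly routes from $x_1'$ to $x_2'$ that wind once around the meridian, so the real content is the figure-by-figure legality check you describe.

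Two corrections to your local model. First, the co-orientation of $B$ is the downward direction in the $I$-factor of $S^1\times I$; the sheets of $B$ over $\tau$ contain the $t$-direction, so it cannot be ``downward in $t$''. Second, the sign you leave as $\pm1$ must come out as the twist that lowers the framing by one, since \Cref{realizable-slopes} uses the slope $N_k-m_k$.
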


\begin{lemma}\label{X-crossing-2}
For each Type~X crossing, the boundary train track contains a closed train route representing a meridian of the over-strand.
\end{lemma}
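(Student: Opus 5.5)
The claim is local, so the plan is to reduce it to the five pictures $(X_1)$--$(X_5)$ of Figure~\ref{fig:overstrand-cases}. Near a Type~X crossing, the over-strand's portion of $\partial\nu(L)$ is an annulus $A_c$. The boundary train track $\partial B'\cap A_c$ is described completely by how the sheets of $B'$ meet this annulus.

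The first step is to describe $\partial B'\cap A_c$ before splitting. The over-arc lies at level $1$, and $B$ meets $A_c$ in three kinds of pieces. Two are the vertical sheets $\tau\times[h,3]$ coming from the two over-type arcs of $\tau$ adjacent to the crossing, which meet $A_c$ along arcs near the top of the tube. The third is the sheet over the mix-type segment $\times[0,3]$, which passes beneath the over-strand. The mix sheet meets $A_c$ in an arc running around the underside of the tube, from the side of one adjacent branch to the side of the other. At the two ends of the mix segment, the sheets of $B$ branch, and the corresponding switches on $A_c$ have directions fixed by the co-orientation (downward positive, as in \Cref{cooriented}).

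The second step is to use the Type~X hypothesis. By definition, the $A$-side of the curve $\gamma$ either runs along the whole vertical segment $[p,\opp(p)]$ or enters or leaves a brick through one of its endpoints. In each case the splitting surface $F$ separates the two sheets on the appropriate side. After splitting along $F$, the arc coming from the mix sheet therefore gets attached at its two switches with opposite "handedness": one end continues on the left sheet from above, the other on the right sheet from below. This is exactly what the twelve pictures record.

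For each case $(X_1)$--$(X_5)$ I then exhibit the closed route explicitly:
\begin{itemize}
\item start at the selected point $x_1'$ (positioned by the height rule stated above);
\item follow the boundary track into the switch on one side;
\item take the branch that goes underneath along the mix arc;
\item come back up on the other side;
\item re-enter the same branch at the first switch, consistently with the switch directions.
\end{itemize}
The resulting loop lies in $A_c$ and goes once around the tube. It is essential in $A_c$ and bounds a meridian disk of $\nu(L)$, so it represents the meridian of the over-strand.

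The main obstacle is checking smoothness at each switch: the route must pass through every switch along a legal (smooth) direction, and must not turn back at a cusp. This is where the Type~X condition is really used. For Type~Y crossings, $(Y_1)$--$(Y_7)$, both switches point the same way, and one cannot return around the tube. I would handle this uniformly rather than case by case. Orient the boundary train track by the co-orientation of $B'$ (\Cref{cooriented-2}) together with the orientation of $\partial\nu(L)$. Then show that in cases $X$ the mix arc is incoming at one switch and outgoing at the other, relative to the meridional direction, so the oriented route closes up. The remaining check, that the loop is not null-homotopic, follows from $\partial\nu(L)\setminus B'$ being bigons (\Cref{bigon}), exactly as in the proof of Theorem~\ref{subtrack-realize}.
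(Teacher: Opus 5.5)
Your reduction to the five local pictures is what the paper does: its whole justification is the blue curves in cases $(X_1)$--$(X_5)$ of Figure~\ref{fig:overstrand-cases}. The gap is in the mechanism you propose for the closing-up step, which is backwards, so the ``uniform'' orientation argument would not go through.

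Splitting cannot create train routes. Since $N(B')\subset N(B)$ and the $I$-fibers of $N(B')$ are subarcs of those of $N(B)$, every closed route of $\partial B'$ is carried by $\partial B$. So a meridian route at a Type~X crossing must already exist in the unsplit local picture, which is $(Y_1)$. You can see this directly:
\begin{itemize}
\item Because $h=0$ on the mix-type arc, the sheet over it is that arc times $[0,3]$ minus $\operatorname{int}(\nu(L))$. It meets the over-strand's tube on top, between the two switches, not only underneath.
\item One branch of the track leaves one of these switches along an under-type arc, runs down the side of the tube, back along the underside, and up the other side into the other switch.
\item Together with the top mix arc, this branch is already a legal closed meridional route.
\end{itemize}
So your claim that in $(Y_1)$--$(Y_7)$ ``both switches point the same way'' is false, at least for $(Y_1)$. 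Likewise, ``incoming at one switch, outgoing at the other'' holds at every crossing before splitting, so it cannot be where the Type~X hypothesis enters.

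What Type~X actually controls is whether splitting along $F$ severs this loop, and your criterion does not detect that. Where $\partial F$ runs along the top mix arc, that arc is doubled. The loop survives only if the copy the meridional branch departs from is joined by a route to the copy it returns to.

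For example, suppose some $\gamma_i$ passes through the crossing along the over-strand; this happens on one of the two sides in each \textsc{StepBelow}. Then $\widetilde D_i$ lies in the over-type part of the $I$-fiber at both ends of the mix arc. Because the strands cross, the under-type part lies on opposite sides of it at the two ends. So the meridional branch leaves one copy and enters the other, and no meridian route survives near that crossing. Yet the branch is still outgoing at one switch and incoming at the other.

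A correct argument has to track which split copy each end of the meridional branch attaches to. That includes the new switches created where $\partial F$ leaves the tube along an under-type arc at height $1$, and the relative heights of the $\widetilde D_i$ meeting the crossing. This bookkeeping is precisely the content of cases $(X_1)$--$(X_5)$. With it, or by reading off the pictures as the paper does, the lemma follows; your orientation criterion alone does not give it.
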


Near a Type~Y crossing, the local configuration of the boundary train track is one of the cases $(Y_1)$--$(Y_7)$. The blue curves in Figure~\ref{fig:overstrand-cases} again realize the corresponding train routes.

\begin{lemma}\label{Y-crossing}
For each Type~Y crossing, the boundary train track contains a train route connecting the selected points $x_1'$ and $x_2'$ on the over-strand that agrees with the blackboard framing.
\end{lemma}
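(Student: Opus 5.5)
The claim is local, so I would prove it by inspecting the boundary train track $\partial B'$ near a single Type~Y crossing. For this I need an explicit local model for $\partial B'$ near an over-strand crossing. Near the crossing, the over-arc lies at height $1$. The train track $\tau$ has a mix-type horizontal segment there, where $h=0$. The sheet $\{(p,t): t\ge h(p)\}$ meets $\partial\nu(L)$ along the over-strand in two pieces. The first consists of arcs running parallel to the over-strand on its lower side (the blackboard-framing push-off). The second consists of short arcs where the sheets rising from the mix-type segment and from the two branches of $\tau$ (entering from the left and exiting to the right) meet the tube around the over-strand, creating switches. Before splitting, this is the configuration $(Y_1)$. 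In it I would exhibit directly a train route from $x_1'$ to $x_2'$ that stays on the blackboard-parallel arc. Along this route the tangential direction at each switch is compatible, because the co-orientation of Theorem~\ref{cooriented} (downward vertical) orients all switches consistently. The route never winds around the meridian, so its framing agrees with the blackboard framing.

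Next I would treat the splitting. The cases $(Y_2)$--$(Y_7)$ differ from $(Y_1)$ only in which horizontal edges adjacent to the crossing, or which brick corners at the endpoints of the vertical segment, are traversed by an output curve $\gamma_i$ on its $B$-side. By the definition of Type~Y, the $A$-side of $\gamma$ never runs along the vertical segment and never enters a brick through its endpoints. Splitting along $\widetilde D_i$ therefore separates sheets only on one side of the crossing. Near $\partial\nu(L)$, this has the effect of peeling the blackboard-parallel arc into two parallel copies, or moving a switch along it. It never breaks the blackboard-parallel arc into pieces that can be rejoined only by passing around the meridian. For each procedure that can produce a Type~Y crossing (\textsc{CrossBrickStep}, \textsc{StepBelow} and \textsc{StepAbove} on the $B$-side, \textsc{CloseAndStore}, \textsc{RestartBelow}, \textsc{RestartAbove}), I would check against the list preceding Lemma~\ref{X-crossing-1} which of $(Y_2)$--$(Y_7)$ occurs. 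I would then follow the copy at the relative height prescribed by the rule for choosing $x_1',x_2'$. That rule says: above $\gamma_i$ if on or below the initial row, below $\gamma_i$ otherwise. This consistency is exactly what lets the chosen copies at $x_1'$ and $x_2'$ lie on the same parallel sheet.

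The main obstacle is this height bookkeeping. I must verify, case by case, that $x_1'$ and $x_2'$ land on the same side of the splitting disk passing through the crossing's neighborhood, and that the switches encountered along the parallel arc are traversed in the smooth direction. I would first try to show that, at a Type~Y crossing, the output curve crosses at most the two horizontal edges on one side. Then either both edges lie on the same $\gamma_i$ with the same relative height rule, or only one edge is on a curve. In the latter case the route simply follows the unsplit sheet through a switch oriented the correct way. Finally, the framing of the resulting route is computed by counting its signed intersections with the meridian disk near the crossing. In every $Y$ case this count is zero, in contrast with the Type~X cases of Lemma~\ref{X-crossing-1}, where the route is forced through a mix-type sheet and acquires one twist.
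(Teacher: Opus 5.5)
Your plan is essentially the paper's argument. The paper proves the lemma purely by local inspection: at a Type~Y crossing the output curves touch the crossing only as in $(Y_1)$--$(Y_7)$, and in each picture (the blue curves in Figure~\ref{fig:overstrand-cases}) it exhibits a blackboard-framed train route between the points $x_1'$ and $x_2'$ fixed by the height rule, which is exactly the case check you defer to. Two corrections when you carry it out. First, measure the twist by the algebraic intersection with the blackboard-parallel longitude, not with a meridian disk; any route from $x_1'$ to $x_2'$ meets a meridian disk with count $\pm1$. Second, co-orientability alone does not make the parallel path smooth at the switches, since the track is co-oriented at Type~X crossings too, so you must check in each picture that the path follows the induced orientation.
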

\subsection{Slope realization}

Let $L_1,\ldots,L_n$ denote the components of $L$. 
To determine the realizable slope interval on each boundary torus, we first associate a vector to each $x\in (S^1\times I)\setminus \tau$.

For a point $x\in (S^1\times I)\setminus \tau$, choose a vertical segment starting above the first row and ending at $x$. Define
\[
\mathbf r(x)=(r_1,\ldots,r_n)\in \mathbb{Z}_{\ge 0}^n,
\]
where $r_k$ is the number of times this segment crosses strands of the component $L_k$. The value of $\mathbf r(x)$ is independent of the chosen vertical segment, and $\mathbf r(x)$ is locally constant on $(S^1\times I)\setminus \tau$. In particular, moving downward across a strand of $L_k$ increases the $k$-th coordinate by $1$.

Suppose that the brick in \textsc{CrossBrickStep} (line~\ref{line:CrossBrickStep}) lies between row $t$ and row $t+1$. Suppose that the left and right vertical edges of this brick correspond to the Type~X crossings $c_{t+1}$ and $c_{t}$, respectively. For $1\le i\le t$, let $c_i$ denote the Type~X crossing between the $i$-th row and the $(i+1)$-st row. For $t+2\le i\le m$ (where $m$ is the number of strands in the braid diagram), let $c_i$ denote the Type~X crossing between the $(i-1)$-st row and the $i$-th row.

Since $c_{t+1}$ and $c_t$ correspond to the left and right vertical edges of this brick, the brick to the left of $c_t$ is the same as the brick to the right of $c_{t+1}$.

For each $t+1\le i \le m-1$, $c_{i+1}$ is determined during the downward expansion phase by an application of \textsc{StepBelow} or \textsc{RestartBelow}. In either case, the brick below $c_i$ is the same as the brick to the right of $c_{i+1}$.

For each $1\le i \le t-1$, either $c_i$ is determined during the upward expansion phase by an application of \textsc{StepAbove} or \textsc{RestartAbove}, or both $c_i$ and $c_{i+1}$ are determined by \textsc{StepBelow} steps before the \textsc{CrossBrickStep} at line~\ref{line:CrossBrickStep}. In the second case, we have $d_{\mathrm{up}}(B,A)=2$ before applying \textsc{StepBelow} to determine $c_i$. In either case, the brick to the left of $c_i$ is the same as the brick above $c_{i+1}$.

Select points $x_0, x_1,\ldots, x_m\in (S^1\times I)\setminus \tau$ as follows:
\begin{enumerate}
    \item $x_0$ lies above the first row in the brick diagram;
    \item For each $1\le i \le t-1$, $x_i$ lies in the brick to the left of $c_i$;
    \item For each $t\le i\le m-1$, $x_i$ lies in the brick to the right of $c_i$;
    \item $x_m$ lies below the last row in the brick diagram.
\end{enumerate}
Then each $\mathbf r(x_{i+1})$ is obtained from $\mathbf r(x_i)$ by increasing the coordinate corresponding to the link component containing the over-strand at the crossing $c_i$ by $1$.

We are now ready to establish an interval of realizable slopes.

\begin{theorem}\label{realizable-slopes}
    For each link component $L_k$ of $L$, every slope in \((-\infty,\, 2g(L_k)-1)\cap \mathbb{Q}\) is realized by the boundary train track $B'\cap \partial \nu(L_k)$, where $g(L_k)$ denotes the genus of $L_k$.
\end{theorem}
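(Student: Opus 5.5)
The plan is to apply \Cref{subtrack-realize} to the boundary train track $\tau_k := B'\cap\partial\nu(L_k)$. \Cref{bigon} shows that $\partial\nu(L_k)\setminus\tau_k$ is a union of bigons, and $\tau_k$ is oriented because $B'$ is co-oriented (\Cref{cooriented-2}). It therefore suffices to find a sub-train track $\tau'_k\subset\tau_k$ that realizes the meridian slope $\infty$ and also realizes slopes approaching $2g(L_k)-1$ from below, or at least realizes $\infty$ and the single slope $2g(L_k)-1$ itself. In the second case, the interval argument behind \Cref{subtrack-realize}, that is, positive combinations of the two measures, gives every rational slope strictly between $-\infty$ and $2g(L_k)-1$. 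Concretely, I would first show that $\tau_k$ carries a closed curve $\lambda_k$ of slope $2g(L_k)-1$ and a meridian $\mu_k$ through a common point. I would then argue that positive rational combinations $a[\lambda_k]+b[\mu_k]$ of the associated measures are realized. These combinations cover exactly the slopes $(2g-1)+b/a$ with $b\ge0$ if the meridian is oriented one way. With the orientation conventions of the train track, the meridian sum in fact yields slopes decreasing toward $-\infty$, which gives the interval $(-\infty,2g(L_k)-1)$.

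To build $\lambda_k$, I would traverse $L_k$ through the selected points $x'$ on each arc cut by crossings. At each under-strand passage the points connect along the blackboard framing with no switches. At each over-strand passage, \Cref{Y-crossing} gives a blackboard route at Type~Y crossings and \Cref{X-crossing-1} gives a route with one extra twist at Type~X crossings. Concatenating these routes gives a closed train route parallel to $L_k$. Its slope equals the blackboard framing of $L_k$ in the braid diagram, which is the self-crossing count $w_k$ of $L_k$, minus the linking contributions with other components, plus the number of Type~X crossings whose over-strand lies on $L_k$, up to a sign convention for the added twist. \Cref{X-crossing-2} provides a closed meridian route at any Type~X crossing with over-strand in $L_k$. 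I would splice this meridian into $\lambda_k$ at that crossing so that the two share a point, which supplies the second slope.

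The key computation, and the step I expect to be the main obstacle, is showing that this slope is exactly $2g(L_k)-1$. By Bennequin/Rudolph, $L_k$ is itself the closure of a positive braid on $m_k$ strands with $c_k$ crossings, and $2g(L_k)-1=c_k-m_k$. The blackboard self-framing of $L_k$ counts its self-crossings $c_k$ (the mixed crossings cancel against the Seifert framing correction only through linking, which must be handled carefully with the vectors $\mathbf r(x_i)$). The number $m_k$ of strands of $L_k$ should correspond to the number of Type~X crossings whose over-strand lies on $L_k$. To prove this, I would use the chain $x_0,\dots,x_m$. Each step increments $\mathbf r$ in the coordinate of the over-strand component at $c_i$, and $\mathbf r(x_m)-\mathbf r(x_0)$ counts the strands of each component met by a vertical line, which is $(m_1,\dots,m_n)$. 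So exactly $m_k$ of the Type~X crossings have over-strand on $L_k$. Each such twist lowers the framing by one relative to the blackboard, giving slope $c_k-m_k$ after subtracting linking with the other components. That subtraction is automatic once slopes are measured in the Seifert framing of $L_k$ alone, because crossings of $L_k$ with other components do not contribute to its self-writhe. Finally, I would invoke \Cref{subtrack-realize} with $\tau'_k$ equal to the union of $\lambda_k$ and the meridian route to conclude that every rational slope in $(-\infty,2g(L_k)-1)$ is realized by $\tau_k$. Getting the twist sign and the framing bookkeeping right is where I expect the difficulty.
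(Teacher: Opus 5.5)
Your proposal is correct and is essentially the paper's proof. It builds a closed route of slope $N_k-m_k$ from Lemmas~\ref{X-crossing-1} and~\ref{Y-crossing}, counts $m_k$ Type~X crossings on $L_k$ via the chain $\mathbf r(x_0),\ldots,\mathbf r(x_m)$, takes the meridian route from Lemma~\ref{X-crossing-2}, and applies \Cref{subtrack-realize} to the union of the two routes, with \Cref{bigon} and \Cref{cooriented-2} supplying the hypotheses. Two of your worries dissolve: no splicing is needed, since closed routes of distinct slopes automatically share a point of the track, and there is no linking correction, since the blackboard framing of $L_k$ measured against its own Seifert longitude is just its self-writhe $N_k$.
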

\begin{proof}
    Suppose that the $k$-th coordinate of $\mathbf r(x_m)$ is $m_k$. Then the number of Type~X crossings whose over-strand lies in $L_k$ equals $m_k$. Since $L_k$ is the closure of a positive braid on $m_k$ strands, its genus is \(g(L_k)=\frac{N_k-m_k+1}{2},\) where $N_k$ denotes the number of crossings in the diagram $D$ whose over-strand and under-strand both lie in $L_k$.

   The slope of the blackboard framing on $\partial\nu(L_k)$ is $N_k$. Hence, by Lemmas~\ref{X-crossing-1} and~\ref{Y-crossing}, there exists a simple closed train route on $B'\cap \partial \nu(L_k)$ with slope
\[
N_k-m_k=2g(L_k)-1.
\]
By Lemma~\ref{X-crossing-2}, there also exists a simple closed train route on $B'\cap \partial \nu(L_k)$ with slope $\infty$. The union of these train routes forms a sub-train track $\tau'\subset B'\cap \partial \nu(L_k)$ which realizes every rational slope in $(-\infty,\,2g(L_k)-1)$.

Since $B'$ is co-orientable (see \Cref{cooriented-2}), the boundary train track $B'\cap \partial \nu(L_k)$ is also co-orientable. Since the torus $\partial \nu(L_k)$ is orientable, this boundary train track is orientable as well. By \Cref{bigon}, the complement of $B'\cap \partial \nu(L_k)$ in $\partial \nu(L_k)$ is a union of bigons. Therefore, by \Cref{subtrack-realize}, every slope in $(-\infty,\,2g(L_k)-1)\cap \mathbb{Q}$ is realized by $B'\cap \partial \nu(L_k)$.
\end{proof}
\section{Conclusion}\label{sec:6}
In this section, we prove the main theorem using the multi-torus-boundary version of \cite[Theorem~2.2]{li2003boundary}, as explicitly stated in \cite[Theorem~3.12]{santoro2024spaces} and \cite[Theorem~2.10]{kalelkar2014taut}.

\begin{theorem}[cf.\ \cite{li2003boundary}, Theorem~2.2]\label{essential-lamination-li}
Let $M$ be an irreducible and orientable $3$-manifold whose boundary is a union of incompressible tori $T_1, T_2, \ldots, T_n$. Suppose that $B$ is a laminar branched surface properly embedded in $M$ and $\partial M\setminus \partial B$ is a union of bigons. Then for any multislope $(s_1, \ldots, s_n) \in (\mathbb{Q}\cup\{\infty\})^n$ that can be realized by the train track $\partial B$, if $B$ does not carry a torus that bounds a solid torus in the manifold after Dehn filling along $(s_1, \ldots, s_n)$, then there exists a lamination $\mathcal{L}$ in $M$ fully carried by $B$ that intersects $\partial M$ in parallel simple closed curves of multislope $(s_1, \ldots, s_n)$.
\end{theorem}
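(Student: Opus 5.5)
This statement is cited from Li (multi-torus-boundary form as in Santoro and Kalelkar--Roberts), so the plan is to reproduce Li's argument in outline rather than derive it from anything specific to positive braids. The overall shape is: use the measures on the boundary train track to build a lamination near $\partial M$, push it into $M$ along $B$, and use the laminar conditions of Definition~\ref{def-laminar} to rule out degenerations.

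First I fix the multislope $(s_1,\ldots,s_n)$ realized by $\partial B$. This means that for each $i$ there is a positive rational transverse measure $\mu_i$ on $\partial B\cap T_i$ fully carrying a union $C_i$ of parallel simple closed curves of slope $s_i$.

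Second, I extend these boundary measures to $B$, and this is where I expect the main difficulty. The sector weights would have to satisfy the branch equations on the interior branch locus with the prescribed boundary values, and that linear system may have no nonnegative solution. So instead I follow Li and work with a splitting. The plan is as follows.

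\begin{enumerate}
\item Split $N(B)$ near $\partial M$ so that $\partial B$ becomes exactly the union of the curves $C_i$. The bigon hypothesis is what makes this possible: a bigon complement lets one collapse the boundary train track onto the carried curves by splitting along the bigons.
\item Apply Li's criterion: a laminar branched surface with no sink disks and no half sink disks fully carries a lamination. This uses Li's splitting/limit construction, in which one repeatedly splits the branched surface so as to eliminate sink disks, keeping $\partial B$ equal to $\cup C_i$ throughout.
\item Use the bigon hypothesis once more to make sure the resulting lamination meets $\partial M$ exactly in the $C_i$, rather than spiralling onto the boundary.
\end{enumerate}

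Third, I argue that the lamination $\mathcal L$ obtained this way is essential after capping off. Its complementary regions are controlled by $M\setminus \operatorname{int}N(B)$. Incompressibility, $\partial$-incompressibility, the absence of monogons and irreducibility pass to the complement of $\mathcal L$. Since $B$ has no Reeb branched surface, no Reeb component arises. Finally, the hypothesis that $B$ carries no torus bounding a solid torus in the Dehn-filled manifold excludes the one remaining failure mode, in which a leaf together with the meridian disks of the filling forms a compressible torus.

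The hardest step is the second one, extending from the boundary into $B$ while keeping the boundary curves fixed. I would handle it exactly as in Li's Theorem~2.2 and not attempt to re-prove it.
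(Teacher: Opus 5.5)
This is the same route as the paper, which gives no proof of this statement and simply imports Li's Theorem~2.2 \cite{li2003boundary} in the multi-torus form of \cite[Theorem~3.12]{santoro2024spaces} and \cite[Theorem~2.10]{kalelkar2014taut}; you likewise defer the substantive step to Li. Your outline should be read only as heuristic, because it misplaces the hypotheses: splitting $\partial B$ onto the $C_i$ comes from full carrying (the definition of ``realized''), not from the bigon condition; once $\partial B$ is a union of disjoint circles no spiralling can occur, so step~(3) is vacuous; and Li's existence theorem assumes there are no sink disks rather than removing them by splitting, so what actually must be checked (and your sketch omits) is that the branched surface after the boundary splitting is still laminar.
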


We now prove the main theorem.

\begin{theorem}
Suppose $L$ is the closure of a non-split positive braid in $S^3$ with components $L_1,\ldots,L_n$, and assume that at least one component is not the unknot. Then the Dehn surgery along a multislope $(s_1,\ldots,s_n)\in\mathbb{Q}^n$ satisfying $s_i<2g(L_i)-1$ for all $i=1,\ldots,n$ yields a $3$-manifold admitting a co-oriented taut foliation.
\end{theorem}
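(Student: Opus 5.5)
The plan is to assemble the pieces already established and apply \Cref{essential-lamination-li} to $M=S^3\setminus\operatorname{int}(\nu(L))$ and the branched surface $B'$.

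First we check the hypotheses on $M$. Since $L$ is non-split and has a nontrivial component, $M$ is irreducible and orientable, and its boundary consists of incompressible tori $\partial\nu(L_1),\ldots,\partial\nu(L_n)$, as observed before \Cref{bigon}. By \Cref{laminar}, $B'$ is laminar, and by \Cref{bigon}, $\partial M\setminus\partial B'$ is a union of bigons. Given $(s_1,\ldots,s_n)$ with $s_i<2g(L_i)-1$, \Cref{realizable-slopes} shows that each $s_i$ is realized by $B'\cap\partial\nu(L_i)$. Taking the union over components, the multislope is realized by $\partial B'$.

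The remaining hypothesis is that $B'$ carries no torus bounding a solid torus in the filled manifold. I expect this to be the main obstacle, and I would argue as follows. By \Cref{disk-2}, the complement of $N(B')$ in $M$ is a single $D^2\times I$ whose horizontal boundary consists of two disks. Suppose $B'$ carried a closed surface $T$. Then $T$ lies in $N(B')$ transverse to the $I$-fibers. By \Cref{transversal-2}, there is a closed curve positively transverse to $B'$, so this curve meets $T$ only with positive sign. If, after isotopy, the transverse curve crosses the sectors carried by $T$, then its algebraic intersection with $T$ is nonzero. But a torus bounding a solid torus separates the filled manifold, and the transverse curve survives in it, so the algebraic intersection must be zero. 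This contradiction is the argument I would try first.

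If that argument fails, because the curve might miss the sectors carried by $T$, I would instead use \Cref{transversal-2} in its full form: the curve meets every branch sector, so it meets every sector that $T$ passes through. Hence the algebraic intersection number is strictly positive, and a separating $T$ is impossible. Either way, $B'$ carries no such torus, and in fact carries no closed separating surface at all.

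Applying \Cref{essential-lamination-li} then gives a lamination $\mathcal L$ fully carried by $B'$ that meets $\partial M$ in curves of multislope $(s_1,\ldots,s_n)$. After Dehn filling, the complementary regions of $\mathcal L$ come from the $D^2\times I$ and the bigon regions on the boundary together with the filling solid tori. By the standard procedure in \cite{li2003boundary}, $\mathcal L$ extends to an essential lamination of the filled manifold whose complementary regions are products or solid tori with product structure. These regions can therefore be filled by leaves, extending $\mathcal L$ to a foliation. Co-orientation passes from \Cref{cooriented-2}. Tautness follows because the transverse closed curve from \Cref{transversal-2} meets every leaf, since $\mathcal L$ is fully carried and the curve meets every branch sector. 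This yields a co-oriented taut foliation on the surgered manifold.
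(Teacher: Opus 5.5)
Your proposal is correct and follows the paper's proof essentially step for step. You verify the hypotheses of \Cref{essential-lamination-li} via \Cref{laminar}, \Cref{bigon} and \Cref{realizable-slopes}, rule out a carried torus bounding a solid torus with the positively transverse curve of \Cref{transversal-2} (your second version of that argument is exactly the paper's), fill the complementary regions, and reuse the same curve for tautness. The one place you are thinner than the paper is the product structure of the complementary regions: it does not come from Li's procedure, but from shrinking $N(B')$ so that $\partial_h N(B')\subset\mathcal L$ and gluing the $D^2\times I$ of \Cref{disk-2} to the $I$-bundle $N(B')\setminus\mathcal L$ along $\partial_v N(B')\setminus\mathcal L$, compatibly with the co-orientation of \Cref{cooriented-2}.
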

\begin{proof}
    Since $L$ is non-split and non-trivial, $S^3\setminus  \operatorname{int}(\nu(L))$ is an irreducible, orientable $3$-manifold whose boundary is a union of incompressible tori. By \Cref{laminar}, $B'$ is a laminar branched surface properly embedded in $S^3 \setminus \operatorname{int}(\nu(L))$. By \Cref{bigon}, $\partial \nu(L)\setminus B'$ is a union of bigons. By \Cref{realizable-slopes}, any multislope $(s_1,\ldots,s_n)\in\mathbb{Q}^n$ satisfying $s_i<2g(L_i)-1$ for all $i=1,\ldots,n$ is realized by the boundary train track $B'\cap \partial \nu(L)$.
    
    By \Cref{transversal-2}, there exists a closed curve $\gamma$ in $S^3\setminus \operatorname{int}(\nu(L))$ positively transverse to $B'$ and intersecting every branch sector of $B'$. For any connected surface $S$ carried by $B'$, the curve $\gamma$ meets $S$ in a nonzero number of points, all with positive sign; hence $S$ cannot be separating. In particular, no torus carried by $B'$ in $S^3\setminus \operatorname{int}(\nu(L))$ becomes the boundary of a solid torus after any Dehn filling.

    By \Cref{essential-lamination-li}, there exists a lamination $\mathcal{L}$ in $S^3\setminus \operatorname{int}(\nu(L))$ fully carried by $B'$ that intersects $\partial \nu(L)$ in parallel simple closed curves of multislope $(s_1,\ldots,s_n)$. Since $\mathcal{L}$ is fully carried by $B'$, we may replace the fibered neighborhood $N(B')$ by a smaller one so that the horizontal boundary $\partial_h B'$ is contained in $\mathcal{L}$. The complement \((S^3\setminus \operatorname{int}(\nu(L)))\setminus \mathcal{L}\) decomposes as the union of \((S^3\setminus \operatorname{int}(\nu(L)))\setminus N(B')\) and \(N(B')\setminus \mathcal{L},\) glued along $\partial_v N(B') \setminus\mathcal{L}$.
    
By \Cref{disk-2}, \((S^3\setminus \operatorname{int}(\nu(L)))\setminus N(B')\) is a trivial open \(I\)-bundle over a disk. By \Cref{cooriented-2}, \(N(B')\setminus \mathcal{L}\) is a trivial open \(I\)-bundle over a co-oriented surface. Moreover, each component of \(\partial_v N(B')\setminus \mathcal{L}\) is a trivial open \(I\)-bundle over an interval. Since the gluing preserves the co-orientation, \((S^3\setminus \operatorname{int}(\nu(L)))\setminus \mathcal{L}\) also admits a trivial open \(I\)-bundle structure over a co-oriented surface. Together with the product foliation by horizontal surfaces, the lamination \(\mathcal{L}\) forms a co-oriented foliation \(\mathcal{F}\) of \(S^3\setminus \operatorname{int}(\nu(L))\). Its restriction to \(\partial \nu(L)\) is a product foliation by parallel simple closed curves of multislope \((s_1,\ldots,s_n)\). The foliation further extends over the Dehn filling with multislope \((s_1,\ldots,s_n)\) to a co-oriented foliation of the filled manifold.

By \Cref{transversal-2}, there exists a closed curve $\gamma$ in $S^3\setminus \operatorname{int}(\nu(L))$ positively transverse to $B'$ and intersecting every branch sector of $B'$. It follows by a standard argument that, after an isotopy, $\gamma$ is transverse to \(\mathcal{F}\) and intersects every leaf. Therefore, the filled manifold admits a co-oriented taut foliation.
\end{proof}

\bibliographystyle{alpha}
\bibliography{ref}
\end{document}